\renewcommand\eqref[1]{(\ref{#1})} 
 \newtheorem{thm}{Theorem}[section]
 \newtheorem{cor}[thm]{Corollary}
 \newtheorem{prop}[thm]{Proposition}
 \theoremstyle{definition}
 \theoremstyle{remark}
 \newtheorem{rem}[thm]{Remark}
 \numberwithin{equation}{section}
\newcommand{\half}{\frac{1}{2}}
\newcommand{\ene}{\mathbb{N}}
\newcommand{\er}{\mathbb{R}}
\newcommand{\ce}{\mathbb{C}}
\newcommand{\efee}{\mathcal{F}}
\newcommand{\bi}{\begin{itemize}}
\newcommand{\cinfm}{{C}^{\infty}(M)}
\newcommand{\ei}{\end{itemize}}
\newcommand{\be}{\begin{enumerate}}
\newcommand{\ee}{\end{enumerate}}
\newcommand{\beq}{\begin{equation}}
\newcommand{\eq}{\end{equation}}
\newcommand{\Dcal}{\mathcal{D}}
\def\p#1{{\left({#1}\right)}}
\def\En{{E_{o}}}
\def\Hcal{{\mathcal H}}
\DeclareMathOperator{\Tr}{Tr}
\def\HS{{\mathtt{HS}}}
\def\Rn{{{\mathbb R}^n}}
\def\Tn{{{\mathbb T}^n}}
\def\Zn{{{\mathbb Z}^n}}
\def\N{{{\mathbb N}}}
\def\C{{{\mathbb C}}}
\def\SU2{{{\rm SU(2)}}}
\def\lapsu2{{{\mathcal L}_\SU2}}
\begin{document}

%
%
%
%
%
%
%
%
%
\title[Fourier multipliers in Hilbert spaces]
 {Fourier multipliers in Hilbert spaces}

\author[Julio Delgado]{Julio Delgado}

\address{%
Department of Mathematics\\
Imperial College London\\
180 Queen's Gate, London SW7 2AZ\\
United Kingdom
}
\email{j.delgado@imperial.ac.uk}

\thanks{The authors were supported by the Leverhulme Grant RPG-2017-151.}
\author[Michael Ruzhansky]{Michael Ruzhansky}

\address{%
Department of Mathematics\\
Imperial College London\\
180 Queen's Gate, London SW7 2AZ\\
United Kingdom
}

\email{m.ruzhansky@imperial.ac.uk}

\subjclass[2010]{Primary 35S05, 58J40; Secondary 22E30, 47B06, 47B10.}

\keywords{Compact manifolds, pseudo-differential operators, eigenvalues, Schatten classes, nuclearity, trace formula. }

\date{\today}
\begin{abstract}
This is a survey on a notion of invariant operators, or Fourier multipliers on Hilbert spaces. This concept is defined with respect to a fixed partition of the space into a direct sum of finite dimensional subspaces. In particular this notion can be applied to the important case of $L^2(M)$ where $M$ is a compact manifold $M$ endowed with a positive measure.  The partition in this case comes from  
 the spectral properties of a a fixed 
elliptic operator $E$. 
\end{abstract}

\maketitle
\tableofcontents

\section{Introduction}

{\em These notes are based on our paper \cite{dr16:fmm} and have been prepared for the instructional volume associated to the Summer School on Fourier Integral Operators held in Ouagadougou, Burkina Faso, where the authors took part during 14--26 September 2015.}

In this note we discuss invariant operators, or Fourier 
multipliers in a general Hilbert space $\Hcal$. This notion is based on a 
partition of $\Hcal$ into a direct sum of finite dimensional subspaces, so that
a densely defined operator on $\Hcal$ can be decomposed as acting in these
subspaces. In the present exposition we follow our detailed description in \cite{dr16:fmm}, with which there are intersections and to which we also refer for further details. 

There are two main examples of this construction: operators on $\Hcal=L^{2}(M)$ for a compact manifold $M$ as well as 
operators on $\Hcal=L^{2}(G)$ for a compact Lie group $G$. The difference in 
approaches to these settings is in the choice of partitions of $\Hcal$ into direct sums of
subspaces: in the former case they are chosen as eigenspaces of a fixed
elliptic pseudo-differential operator 
on $M$ while in the latter case they are chosen as linear spans 
of matrix coefficients of inequivalent irreducible unitary representations of $G$.

Let $M$ be a closed manifold (i.e. a  compact smooth manifold without boundary) of 
dimension $n$ endowed with a positive measure $dx$. 
Given an elliptic positive pseudo-differential operator $E$ of order $\nu$ on $M$, 
by considering an orthonormal basis consisting of eigenfunctions of $E$ we can  
associate a discrete Fourier analysis to the operator $E$ in the sense introduced 
by Seeley (\cite{see:ex}, \cite{see:exp}). 
 
These notions can be applied to the derivation of conditions 
characterising those invariant operators on
$L^{2}(M)$ that belong to Schatten classes.
Furthermore, sufficient conditions for the $r$-nuclearity on $L^{p}$-spaces can also be obtained as well as the corresponding trace formulas relating operator traces to expressions involving their symbols. More details on these applications can be found in Section 8 of \cite{dr16:fmm}.

A characteristic feature that appears is that no regularity is assumed
neither on the symbol nor on the kernel. 
In the case of compact Lie groups, our results extend results on
Schatten classes and on $r$-nuclear operators on $L^p$ spaces that have been
obtained in  \cite{dr13:schatten} and \cite{dr13a:nuclp}. This can be shown by relating the symbols introduced in this paper to matrix-valued symbols on compact
Lie groups developed in 
\cite{rt:groups} and in \cite{rt:book}.

To formulate the notions more precisely, let $\Hcal$ be a complex Hilbert space and let 
$T:\Hcal\rightarrow \Hcal$ be a linear compact operator. If we denote by 
$T^*:\Hcal\rightarrow \Hcal$ the adjoint of  $T$, then the linear operator 
$(T^*T)^\half:\Hcal\rightarrow \Hcal$ is positive and compact. 
Let $(\psi_k)_k$ be an orthonormal basis for $\Hcal$ consisting of eigenvectors of 
$|T|=(T^*T)^\half$, and let $s_k(T)$ be the eigenvalue corresponding to the eigenvector 
$\psi_k$, $k=1,2,\dots$. The non-negative numbers $s_k(T)$, $k=1,2,\dots$, 
are called the singular values of $T:\Hcal\rightarrow \Hcal$. 
If $0<p<\infty$ and the sequence of singular values is $p$-summable, then $T$ 
is said to belong to the Schatten class  ${S}_p(\Hcal)$, and it is well known that each 
${S}_p(\Hcal)$ is an ideal in $\mathscr{L}(\Hcal)$. 
If $1\leq p <\infty$, a norm is associated to ${S}_p(\Hcal)$ by
 \[
 \|T\|_{S_p}=\left(\sum\limits_{k=1}^{\infty}(s_k(T))^p\right)^{\frac{1}{p}}.
 \] 
If $1\leq p<\infty$ 
 the class $S_p(\Hcal)$ becomes a Banach space endowed by the norm $\|T\|_{S_p}$. 
 If $p=\infty$ we define $S_{\infty}(\Hcal)$ as the class of bounded linear operators on $\Hcal$, 
 with 
$\|T\|_{S_\infty}:=\|T\|_{op}$, the operator norm.  For the Schatten class $S_2$ we will sometimes write $\|T\|_{\HS}$ instead of $\|T\|_{S_2}$. In the case $0<p<1$  the quantity $\|T\|_{S_p}$ only defines a  quasi-norm, and $S_p(\Hcal)$ is also complete. 
The space $S_1(\Hcal)$ is known as the {\em trace class} and an element of
 $S_2(\Hcal)$ is usually said to be a {\em Hilbert-Schmidt} operator. 
 For the  basic theory of Schatten classes we refer the reader to 
 \cite{gokr}, \cite{r-s:mp}, \cite{sim:trace}, \cite{sch:id}. 

It is well known that the class $S_2(L^2)$ is characterised by the   square integrability of the corresponding integral kernels, however, kernel estimates of this type are not effective
for classes $S_p(L^2)$ with $p<2$. This is explained by a classical Carleman's example
\cite{car:ex} on the summability of Fourier coefficients of continuous functions 
(see \cite{dr13a:nuclp} for a complete explanation of this fact). 
This obstruction explains the relevance of symbolic Schatten criteria and here we will clarify 
the advantage of the symbol approach with respect to this obstruction.
With this approach, no regularity of the kernel needs to be assumed.

We introduce $\ell^{p}$-style norms on the space of symbols $\Sigma$,
yielding discrete spaces $\ell^{p}(\Sigma)$ for $0<p\leq\infty$, normed
for $p\geq 1$.
Denoting by $\sigma_{T}$ the matrix symbol of an invariant operator $T$ provided by
Theorem \ref{THM:inv}, Schatten classes of invariant operators 
on $L^{2}(M)$ can be characterised in terms of symbols.  
Here, the condition that $T$ is invariant will mean that $T$ is strongly 
commuting with $E$ (see Theorem \ref{THM:inv}). 
On the level of the Fourier transform this means that
$$\widehat{Tf}(\ell)=\sigma(\ell) \widehat{f}(\ell)$$ for a family of
matrices $\sigma(\ell)$, i.e. $T$ assumes the familiar form of a 
Fourier multiplier.

In Section \ref{SEC:abstract} in 
Theorem \ref{THM:inv-rem} we discuss the abstract notion of symbol for
operators densely defined in a general Hilbert space $\Hcal$, and give several alternative
formulations for invariant operators, or for Fourier multipliers, relative to a fixed
partition of $\Hcal$ into a direct sum of finite dimensional subspaces,
$$\Hcal=\bigoplus_{j} H_{j}.$$
Consequently, in Theorem \ref{L2-abstract} 
we give the necessary and sufficient condition for the bounded extendability
of an invariant operator to ${\mathscr L}(\Hcal)$ in terms of its symbol, and in 
Theorem \ref{schchr-abstract} the necessary and sufficient condition for the
operator to be in Schatten classes $S_{r}(\Hcal)$ for $0<r<\infty$, as well
as the trace formula for operators in the trace class $S_{1}(\Hcal)$ in terms
of their symbols. 
As our subsequent analysis relies to a large extent on properties of
elliptic pseudo-differential operators on $M$, in Sections
\ref{SEC:Fourier} and \ref{SEC:invariant} we specify this abstract analysis
to the setting of operators densely defined on $L^{2}(M)$. The main difference
is that we now adopt the Fourier analysis to a fixed elliptic positive
pseudo-differential operator $E$ on $M$, contrary to the case of 
an operator $\En\in {\mathscr L}(\Hcal)$ in Theorem \ref{THM:inv-rem2}.

The notion of invariance depends on the choice of the spaces $H_{j}$.
Thus, in the analysis of operators on $M$ we take $H_{j}$'s to be 
the eigenspaces of $E$. However, other choices are possible. For example,
for $\Hcal=L^{2}(G)$ for a compact Lie group $G$, choosing $H_{j}$'s as
linear spans of representation coefficients for inequivalent irreducible
unitary representations of $G$, we make a link to the
quantization of pseudo-differential operators on compact Lie groups
as in \cite{rt:book}. These two partitions coincide when inequivalent representations of
$G$ produce distinct eigenvalues of the Laplacian; for example, this is the case
for $G={\rm SO(3)}.$ However, the partitions are different when inequivalent
representations produce equal eigenvalues, which is the case, for example, for 
$G={\rm SO(4)}.$ For the more explicit example on $\Hcal=L^{2}(\Tn)$ on the
torus see Remark \ref{REM:torus}. A similar choice could be made in other
settings producing a discrete spectrum and finite dimensional eigenspaces,
for example for operators in Shubin classes on $\Rn$, see Chodosh 
\cite{Chodosh} for the case $n=1$.

As an illustration we give an application to the spectral theory. The analogous concept to Schatten classes in the setting of Banach spaces is
the notion of $r$-nuclearity introduced by Grothendieck \cite{gro:me}.
It has applications to questions of the distribution of eigenvalues of operators
in Banach spaces. 
In the setting of compact Lie groups these applications have been 
discussed in \cite{dr13a:nuclp} and they include conclusions on the 
distribution or summability of eigenvalues of operators 
acting on $L^{p}$-spaces. Another application is the 
Grothendieck-Lidskii formula which is the formula for the trace of operators
on $L^{p}(M)$.
Once we have $r$-nuclearity, most of further arguments are then purely 
functional analytic, so they apply equally well
in the present setting of closed manifolds. 

The paper is organised as follows. 
In Section \ref{SEC:abstract} we discuss Fourier multipliers and their symbols in
general Hilbert spaces.
In Section \ref{SEC:Fourier} we associate a
global Fourier analysis to an elliptic positive pseudo-differential operator $E$ on 
a closed manifold $M$. In Section \ref{SEC:invariant} we introduce the class
of operators invariant relative to $E$ as well as their matrix-valued symbols,
and apply this to characterise invariant operators in Schatten classes in
Section \ref{SEC:Schatten-mfds}.

Throughout the paper, we denote $\N_{0}=\N\cup\{0\}$.
Also $\delta_{j\ell}$ will denote the Kronecker delta, i.e.
$\delta_{j\ell}=1$ for $j=\ell$, and $\delta_{j\ell}=0$ for $j\not=\ell$.

The authors would like to thank V\'eronique Fischer, Alexandre Kirilov, and Augusto Almeida de Moraes Wagner for comments.

\section{Fourier multipliers in Hilbert spaces}
\label{SEC:abstract}

In this section we present an abstract set up to describe what we will
call invariant operators, or Fourier multipliers, acting on a general
Hilbert space $\Hcal$. We will give several characterisations of such
operators and their symbols. Consequently, we will apply these notions
to describe several properties of the operators, in particular, their
boundedness on $\Hcal$ as well as the Schatten properties.

We note that direct integrals (sums in our case) of Hilbert spaces
have been investigated in a much greater generality, see e.g.
Bruhat \cite{Bruhat:BK-Tata-1968},
Dixmier \cite[Ch 2., \S 2]{Dixmier:bk-algebras},
\cite[Appendix]{Dixmier:bk-Cstar-algebras}. The setting required for our
analysis is much simpler, so we prefer to adapt it specifically for
consequent applications.

The main application of the constructions below will be in the setting when
$M$ is a compact manifold without boundary, $\Hcal=L^{2}(M)$ and
$\Hcal^{\infty}=C^{\infty}(M)$, which will be described in detail
in Section \ref{SEC:Fourier}. However, several facts can be more
clearly interpreted in the setting of abstract Hilbert spaces, which will
be our set up in this section. With this particular example in mind,
in the following theorem, we can
think of $\{e_{j}^{k}\}$ being an orthonormal basis given by eigenfunctions
of an elliptic operator on $M$, and $d_{j}$ the corresponding 
multiplicities. However, we allow flexibility in grouping the eigenfunctions
in order to be able to also cover the case of operators
on compact Lie groups.

\begin{thm}\label{THM:inv-rem}
Let $\Hcal$ be a complex Hilbert space and let $\Hcal^{\infty}\subset \Hcal$ be a dense
linear subspace of $\Hcal$. Let $\{d_{j}\}_{j\in\N_{0}}\subset\N$ and let
$\{e_{j}^{k}\}_{j\in\N_{0}, 1\leq k\leq d_{j}}$ be an
orthonormal basis of $\Hcal$ such that
$e_{j}^{k}\in \Hcal^{\infty}$ for all $j$ and $k$. Let $H_{j}:={\rm span} \{e_{j}^{k}\}_{k=1}^{d_{j}}$,
and let $P_{j}:\Hcal\to H_{j}$ be the orthogonal projection.
For $f\in\Hcal$, we denote $$\widehat{f}(j,k):=(f,e_{j}^{k})_{\Hcal}$$ and let
$\widehat{f}(j)\in \ce^{d_{j}}$ denote the column of $\widehat{f}(j,k)$, $1\leq k\leq d_{j}.$
Let $T:\Hcal^{\infty}\to \Hcal$ be a linear operator.
Then the following
conditions are equivalent:
\begin{itemize}
\item[(A)] For each $j\in\ene_0$, we have $T(H_j)\subset H_j$. 
\item[(B)] For each $\ell\in\ene_0$ there exists a matrix 
$\sigma(\ell)\in\ce^{d_{\ell}\times d_{\ell}}$ such that for all $e_j^k$ 
$$
\widehat{Te_j^k}(\ell,m)=\sigma(\ell)_{mk}\delta_{j\ell}.
$$
\item[(C)]  If in addition, $e_j^k$ are in the domain of $T^*$ for all $j$ and $k$, then 
for each $\ell\in\ene_0 $ there exists a matrix 
$\sigma(\ell)\in\ce^{d_{\ell}\times d_{\ell}}$ such that
 \[\widehat{Tf}(\ell)=\sigma(\ell)\widehat{f}(\ell)\]
 for all $f\in\Hcal^{\infty}.$
\end{itemize}

The matrices $\sigma(\ell)$ in {\rm (B)} and {\rm (C)} coincide.

The equivalent properties {\rm (A)--(C)} follow from the condition 
\begin{itemize}
\item[(D)] For each $j\in\ene_0$, we have
$TP_j=P_jT$ on $\Hcal^{\infty}$.
\end{itemize}
If, in addition, $T$ extends to a bounded operator
$T\in{\mathscr L}(\Hcal)$ then {\rm (D)} is equivalent to {\rm (A)--(C)}.
\end{thm}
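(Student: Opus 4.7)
The natural order is to run (A)$\Leftrightarrow$(B), then (B)$\Rightarrow$(C)$\Rightarrow$(B) using the adjoint, then (D)$\Rightarrow$(A) without any side condition, and finally (A)$\Rightarrow$(D) under the boundedness hypothesis.

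First, for (A)$\Rightarrow$(B), note that (A) says $Te_j^k\in H_j$, so we may expand $Te_j^k=\sum_{m=1}^{d_j}c_{mk}^{(j)}e_j^m$ and set $\sigma(j)_{mk}:=c_{mk}^{(j)}$. Pairing against $e_\ell^m$ gives $\widehat{Te_j^k}(\ell,m)=\sigma(\ell)_{mk}\delta_{j\ell}$ because distinct $H_\ell$ are mutually orthogonal. Conversely, (B) says $\widehat{Te_j^k}(\ell,m)=0$ whenever $\ell\neq j$, so the expansion of $Te_j^k$ in the orthonormal basis $\{e_\ell^m\}$ only has components inside $H_j$, hence $Te_j^k\in H_j$; by linearity $T(H_j)\subset H_j$, which is (A). (Note that $H_j$ is finite dimensional and each $e_j^k\in\Hcal^\infty$, so the restriction $T|_{H_j}$ is well-defined.)

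Next, assuming the $e_\ell^m$ lie in the domain of $T^*$, the equivalence (B)$\Leftrightarrow$(C) is essentially a duality. For (B)$\Rightarrow$(C), compute
\[
\widehat{Tf}(\ell,m)=(Tf,e_\ell^m)_\Hcal=(f,T^*e_\ell^m)_\Hcal,
\]
and expand $T^*e_\ell^m$ in the orthonormal basis: the coefficient against $e_j^k$ equals $\overline{(Te_j^k,e_\ell^m)_\Hcal}=\overline{\sigma(\ell)_{mk}\delta_{j\ell}}$, so $T^*e_\ell^m=\sum_k\overline{\sigma(\ell)_{mk}}\,e_\ell^k$. Plugging this back yields $\widehat{Tf}(\ell,m)=\sum_k\sigma(\ell)_{mk}\widehat{f}(\ell,k)=(\sigma(\ell)\widehat{f}(\ell))_m$. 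The converse (C)$\Rightarrow$(B) is immediate by specializing $f=e_j^k$, since $\widehat{e_j^k}(\ell,n)=\delta_{j\ell}\delta_{kn}$. The two $\sigma(\ell)$'s coincide by construction.

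For (D)$\Rightarrow$(A), if $v\in H_j$ then $P_jv=v$ and $v\in\Hcal^\infty$ (because $H_j\subset\Hcal^\infty$ as the span of vectors in $\Hcal^\infty$), so $Tv=TP_jv=P_jTv\in H_j$. The remaining implication is the one under the boundedness hypothesis, and it is the main technical point. Given $f\in\Hcal^\infty$, the partial sums $\sum_{j\leq N}P_jf$ converge to $f$ in $\Hcal$. Since $T$ now extends boundedly to $\Hcal$, we may pass to the limit inside $T$:
\[
Tf=\sum_{j=0}^{\infty}T(P_jf)\quad\text{in }\Hcal.
\]
Each $P_jf\in H_j\subset\Hcal^\infty$ and (A) gives $T(P_jf)\in H_j$. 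Applying $P_\ell$ (which is bounded) and using orthogonality of the $H_j$'s yields $P_\ell Tf=T(P_\ell f)$, i.e.\ (D). The only subtle point is this interchange of $T$ with an infinite sum, which is precisely why the boundedness assumption is needed; without it one cannot push $T$ through the series of projections, and indeed (A)--(C) can hold for merely densely defined $T$ without (D) being meaningful on all of $\Hcal^\infty$.
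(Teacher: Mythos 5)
Your proof is correct and follows essentially the same route as the paper's (whose details are deferred to \cite{dr16:fmm}): (A)$\Leftrightarrow$(B) by expanding $Te_j^k$ in the orthonormal basis and using orthogonality of the $H_j$'s, (B)$\Leftrightarrow$(C) via the adjoint identity $(Tf,e_\ell^m)_\Hcal=(f,T^*e_\ell^m)_\Hcal$ (which is exactly where the domain hypothesis on $T^*$ enters), (D)$\Rightarrow$(A) by applying (D) to $v=P_jv\in H_j\subset\Hcal^\infty$, and (A)$\Rightarrow$(D) by pushing the bounded extension of $T$ through the series $f=\sum_j P_jf$ and then applying the bounded projection $P_\ell$. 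Only a cosmetic point: your closing parenthetical is slightly off, since (D) is always meaningful on $\Hcal^\infty$ (as $P_j$ maps $\Hcal$ into $H_j\subset\Hcal^\infty$); what can fail without the boundedness assumption is the validity of the implication (A)$\Rightarrow$(D), not the well-definedness of (D).
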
 

Under the assumptions of Theorem \ref{THM:inv-rem}, we have the direct sum 
decomposition
\begin{equation}\label{EQ:sum}
\Hcal = \bigoplus_{j=0}^{\infty} H_{j},\quad H_{j}={\rm span} \{e_{j}^{k}\}_{k=1}^{d_{j}},
\end{equation}
and we have $d_{j}=\dim H_{j}.$
The two applications that we will consider will be with $\Hcal=L^{2}(M)$ for a
compact manifold $M$ with $H_{j}$ being the eigenspaces of an elliptic 
pseudo-differential operator $E$, or with $\Hcal=L^{2}(G)$ for a compact Lie group
$G$ with $$H_{j}=\textrm{span}\{\xi_{km}\}_{1\leq k,m\leq d_{\xi}}$$ for a
unitary irreducible representation $\xi\in[\xi_{j}]\in\widehat{G}$. The difference
is that in the first case we will have that the eigenvalues of $E$ corresponding
to $H_{j}$'s are all distinct, while in the second case the eigenvalues of the Laplacian
on $G$ for which $H_{j}$'s are the eigenspaces, may coincide.
In Remark \ref{REM:torus} we give an example of this difference for operators on
the torus $\Tn$.

In view of properties (A) and (C), respectively, an operator $T$ satisfying any of
the equivalent properties (A)--(C) in
Theorem \ref{THM:inv-rem}, will be called an {\em invariant operator}, or
a {\em Fourier multiplier relative to the decomposition
$\{H_{j}\}_{j\in\N_{0}}$} in \eqref{EQ:sum}.
If the collection $\{H_{j}\}_{j\in\N_{0}}$
is fixed once and for all, we can just say that $T$ is {\em invariant}
or a {\em Fourier multiplier}.

The family of matrices $\sigma$ will be
called the {\em matrix symbol of $T$ relative to the partition $\{H_{j}\}$ and to the
basis $\{e_{j}^{k}\}$.}
It is an element of the space $\Sigma$ defined by
\begin{equation}\label{EQ:Sigma1}
\Sigma=\{\sigma:\N_{0}\ni\ell\mapsto\sigma(\ell)\in \ce^{d_{\ell}\times d_{\ell}}\}.
\end{equation}
A criterion for the extendability of $T$ to ${\mathscr L}(\Hcal)$ in terms
of its symbol will be given in Theorem \ref{L2-abstract}.

For $f\in\Hcal$, in the notation of Theorem \ref{THM:inv-rem},
by definition we have
\begin{equation}\label{EQ:ser}
f=\sum_{j=0}^{\infty} \sum_{k=1}^{d_{j}} \widehat{f}(j,k) e_{j}^{k}
\end{equation}
with the convergence of the series in $\Hcal$.
Since $\{e^k_j\}_{j\geq 0}^{1\leq k\leq d_j}$ is a complete orthonormal 
system on  $\Hcal$, for all $f\in \Hcal$ we have the Plancherel formula
\beq \label{EQ:Plancherel}
\|f\|^2_{\Hcal}=\sum\limits_{j=0}^{\infty}\sum\limits_{k=1}^{d_j}|( f,e_j^k)|^2
=  \sum\limits_{j=0}^{\infty}\sum\limits_{k=1}^{d_j}|\widehat{f}(j,k)|^{2}
=\|\widehat{f}\|^{2}_{\ell^2(\N_{0},\Sigma)},
\eq
where we interpret $\widehat{f}\in\Sigma$ as an element of the space
\begin{equation}\label{EQ:aux3}
\ell^2(\N_{0,}\Sigma)=
\{h:\ene_0\rightarrow \prod\limits_d\ce^{d}: h(j)\in \ce^{d_j}\, \mbox{ and }\,\sum\limits_{j=0}^{\infty}\sum\limits_{k=1}^{d_j}|h(j,k)|^2<\infty\}, 
\end{equation}
and where we have written $h(j,k)=h(j)_k$. 
In other words, $\ell^2(\N_{0,}\Sigma)$ is the space of all $h\in\Sigma$ such that
$$
\sum\limits_{j=0}^{\infty}\sum\limits_{k=1}^{d_j}|h(j,k)|^2<\infty.
$$
We endow  $\ell^2(\N_{0},\Sigma)$ with the norm
\begin{equation}\label{EQ:aux4}
\|h\|_{\ell^2(\N_{0,}\Sigma)}:=\left(\sum\limits_{j=0}^{\infty}\sum\limits_{k=1}^{d_j}|h(j,k)|^2\right)^{\half}.
\end{equation}

We note that the matrix symbol $\sigma(\ell)$ depends 
not only on the partition \eqref{EQ:sum} but also
on the choice of the orthonormal basis.
Whenever necessary, we will indicate the dependance of $\sigma$ on the orthonormal 
basis by writing $(\sigma,\{e_j^k\}_{j\geq 0}^{1\leq k\leq d_j} )$ and we also will refer to 
$(\sigma,\{e_j^k\}_{j\geq 0}^{1\leq k\leq d_j} )$ as the {\em symbol} of $T$. 
Throughout this  section the orthonormal basis will be fixed and unless there is some 
risk of confusion the symbols will be denoted simply by $\sigma$.  
In the invariant language,  
we have that the transpose of the symbol,
$\sigma(j)^{\top}=T|_{H_{j}}$ is just the restriction of
$T$ to $H_{j}$, which is well defined in view of the property (A).

We will also sometimes 
write $T_{\sigma}$ to indicate that $T_{\sigma}$ is an operator corresponding to the 
symbol $\sigma $. It is clear from the definition that invariant operators are 
uniquely determined by their symbols. Indeed, if $T=0$ we obtain 
$\sigma=0$  for any choice of an orthonormal basis.  
Moreover, we note that by taking $j=\ell$ in (B) of Theorem \ref{THM:inv-rem} we obtain
the formula for the symbol:
\beq\label{symbinv}
\sigma(j)_{mk}=\widehat{Te_j^k}(j,m),
\eq
for all $1\leq k,m\leq d_j$. The formula (\ref{symbinv}) furnishes 
an explicit formula for the symbol in terms of the operator and the orthonormal basis. 
The definition of Fourier coefficients tells us that for invariant operators
we have 
\beq\label{symbinv2}
\sigma(j)_{mk}=({Te_j^k},e_j^m)_{H}.
\eq
In particular,  for the identity operator $T=I$ we have $\sigma_{I}(j)=I_{d_{j}}$,
where $I_{d_{j}}\in \C^{{d_{j}}\times {d_{j}}}$ is the identity matrix.

Let us now indicate a formula relating symbols 
with respect to different orthonormal basis. 
If $\{e_{\alpha}\}$ and $\{f_{\alpha}\}$ are orthonormal bases of 
$\Hcal$, we consider the unitary operator $U$ determined by 
$U(e_{\alpha})=f_{\alpha}$. Then we have
\[
(Te_{\alpha}, e_{\beta})_{\Hcal}=(UTe_{\alpha}, Ue_{\beta})_{\Hcal}
=(UTU^*Ue_{\alpha}, Ue_{\beta})_{\Hcal}
=(UTU^*f_{\alpha}, f_{\beta})_{\Hcal}.
\]
Thus, if $(\sigma_{T}, \{e_{\alpha}\})$ denotes the symbol of $T$ with respect to the 
orthonormal basis $\{e_{\alpha}\}$ and $(\sigma_{UTU^*}, \{f_{\alpha}\})$ 
denotes the symbol of $UTU^*$ with respect to the orthonormal basis $\{f_{\alpha}\}$ 
we have obtained the relation
\beq\label{difsymb} (\sigma_{T}, \{e_{\alpha}\})=({\sigma_{UTU^*}}, \{f_{\alpha}\}).\eq
Thus, the equivalence relation of basis $\{e_{\alpha}\}\sim  \{f_{\alpha}\}$ given by 
a unitary operator $U$ induces
the equivalence relation on the set $\Sigma$ of symbols given by 
\eqref{difsymb}. In view of this,
we can also think of the symbol being independent of a choice of basis, as an element of the space
$\Sigma/\sim$ with the equivalence relation given by
\eqref{difsymb}.

We make another remark concerning part (C) of Theorem \ref{THM:inv-rem}.
We use the condition that $e_j^k$ are in the domain ${\rm Dom}(T^*)$ of $T^*$ in showing the implication
(B) $\Longrightarrow$ (C). Since $e_j^k$'s give a basis in $\Hcal$, and are all contained in 
${\rm Dom}(T^*)$, it follows that ${\rm Dom}(T^*)$ is dense in $\Hcal$. In particular, 
by \cite[Theorem VIII.1]{r-s:vol1}, $T$ must be closable (in part (C)). These conditions are not restrictive for the further
analysis since they are satisfied in several natural applications.

The principal application of the notions above will be as follows,
except for in the sequel we sometimes need more general operators $E$
unbounded on $\Hcal$. In order to distinguish from this general case,
in the following theorem we use the notation $\En$.

\begin{thm}\label{THM:inv-rem2}
Continuing with the notation of Theorem \ref{THM:inv-rem}, let 
$\En\in {\mathscr L}(\Hcal)$ be a linear continuous operator such
that $H_{j}$ are its eigenspaces:
$$\En e_{j}^{k}=\lambda_{j} e_{j}^{k}$$ for each $j\in\ene_{0}$ and all $1\leq k\leq d_{j}$.
Then equivalent conditions {\rm (A)--(C)} imply the property 
\begin{itemize}
\item[(E)]
For each $j\in\ene_{0}$ and $1\leq k\leq j$, we have
$T\En e_{j}^{k}=\En T e_{j}^{k},$
\end{itemize}
and if 
$\lambda_{j}\not=\lambda_{\ell}$ 
for $j\not=\ell$, then {\rm (E)} is equivalent to  
properties {\rm (A)--(C)}.

\smallskip
Moreover, if $T$ extends to a bounded operator
$T\in{\mathscr L}(\Hcal)$ then equivalent properties {\rm (A)--(D)} 
imply the condition
\begin{itemize}
\item[(F)]  $T\En=\En T$ on $\Hcal$,
\end{itemize}
and if also $\lambda_{j}\not=\lambda_{\ell}$ 
for $j\not=\ell$, then {\rm (F)} is equivalent to  {\rm (A)--(E)}.
\end{thm}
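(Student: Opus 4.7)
The plan is to exploit the fact that each $H_j$ is an eigenspace of $\En$ for $\lambda_j$ and that, once we have invariance of $H_j$ under $T$, commutation with $\En$ becomes algebraically trivial; the distinct-eigenvalue assumption will then be used to invert this reasoning.

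First I would prove (A)--(C) $\Rightarrow$ (E). By property (A), $Te_j^k\in H_j$, so $\En Te_j^k = \lambda_j Te_j^k$. On the other hand, $T\En e_j^k = T(\lambda_j e_j^k) = \lambda_j Te_j^k$, so the two sides agree. For the converse (E) $\Rightarrow$ (A) under the distinctness hypothesis $\lambda_j\ne\lambda_\ell$ for $j\ne\ell$, I would expand $Te_j^k$ in the orthonormal basis as
\[
Te_j^k = \sum_{\ell,m} c_{\ell m}\, e_\ell^m, \qquad c_{\ell m}=\widehat{Te_j^k}(\ell,m),
\]
with convergence in $\Hcal$. Continuity of $\En\in\mathscr{L}(\Hcal)$ allows me to move $\En$ inside the sum, yielding $\En Te_j^k=\sum_{\ell,m}\lambda_\ell c_{\ell m} e_\ell^m$; meanwhile $T\En e_j^k=\lambda_j\sum_{\ell,m} c_{\ell m} e_\ell^m$. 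Equating these by (E) and matching Fourier coefficients gives $(\lambda_\ell-\lambda_j)c_{\ell m}=0$, so $c_{\ell m}=0$ whenever $\ell\ne j$. Hence $Te_j^k\in H_j$, and since the $e_j^k$ span $H_j$, we have $T(H_j)\subset H_j$, which is (A).

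Next I would address the bounded case. Assuming $T\in\mathscr{L}(\Hcal)$, Theorem~\ref{THM:inv-rem} already tells us that (A)--(D) are equivalent, so it suffices to pass between (D) and (F). For (D) $\Rightarrow$ (F), every $f\in\Hcal$ admits the orthogonal decomposition $f=\sum_{j=0}^\infty P_j f$, and because $\En P_j f=\lambda_j P_j f$ and $\En$ is bounded, $\En f=\sum_{j} \lambda_j P_j f$ in $\Hcal$. Applying the bounded operator $T$ termwise and then using (D) gives
\[
T\En f = \sum_{j=0}^\infty \lambda_j T P_j f = \sum_{j=0}^\infty \lambda_j P_j Tf = \En T f,
\]
which is (F). Conversely, (F) evaluated on any $e_j^k$ immediately yields (E), and under the distinct-eigenvalue hypothesis the first part of the proof gives (A)--(C); together with boundedness, Theorem~\ref{THM:inv-rem} promotes these back to (D).

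The main obstacle is conceptual rather than computational: it is the distinctness hypothesis $\lambda_j\ne\lambda_\ell$. Without it, the relation $(\lambda_\ell-\lambda_j)c_{\ell m}=0$ does not force $c_{\ell m}=0$ across different indices sharing an eigenvalue, so $T$ could mix two eigenspaces of $\En$ that have been grouped into separate $H_j$'s while still commuting with $\En$ on basis vectors. Everything else is routine bookkeeping with orthonormal expansions and the continuity of $T$ and $\En$ needed to interchange bounded operators with convergent series in $\Hcal$.
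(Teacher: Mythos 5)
Your argument is correct and is essentially the standard proof of this theorem (the survey states it without proof, deferring to \cite{dr16:fmm}, where the same approach is taken): property (A) makes the commutation on basis vectors trivial, the orthonormal expansion together with distinctness of the $\lambda_j$ reverses it, and boundedness reduces (D)$\Leftrightarrow$(F) to termwise manipulation of the convergent series $f=\sum_j P_jf$. The only step worth making explicit is that (D), stated on $\Hcal^{\infty}$, extends to all of $\Hcal$ by density once $T$ is bounded (both $TP_j$ and $P_jT$ are then bounded and agree on the dense subspace), which you use implicitly when writing $\lambda_j TP_jf=\lambda_j P_jTf$ for general $f\in\Hcal$.
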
 

For an operator $T=F(\En)$, when it is well-defined by the spectral calculus, we have
\begin{equation}\label{EQ:symbol-Fa}
\sigma_{F(\En)}(j)=F(\lambda_{j}) I_{d_{j}}.
\end{equation}
In fact, this is also well-defined then 
for a function $F$ defined on $\lambda_j$, with finite values which 
are e.g. $j$-uniformly bounded (also for non self-adjoint $E_o$).

We have the following criterion for the extendability of
a densely defined invariant operator $T:\Hcal^{\infty}\to \Hcal$ to ${\mathscr L}(\Hcal)$,
which was an additional hypothesis for properties (D) and (F).
In the statements below we fix a partition into $H_{j}$'s as in \eqref{EQ:sum}
and the invariance refers to it.

\begin{thm}\label{L2-abstract} 
An invariant linear operator $T:\Hcal^{\infty}\to \Hcal$ extends to a bounded
operator from $\Hcal$ to $\Hcal$ if and only if its symbol $\sigma$ satisfies
$\sup\limits_{\ell\in\N_{0}}\|\sigma(\ell)\|_{{\mathscr L}(H_{\ell})}<\infty.$
Moreover, denoting this extension also by $T$, we have
\[
\| T\|_{{\mathscr L}(\Hcal)} =\sup\limits_{\ell\in \N_{0}}\|\sigma(\ell)\|_{{\mathscr L}(H_{\ell})}.
\]
\end{thm}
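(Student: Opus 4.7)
The plan is to prove the two inequalities between $\|T\|_{\mathscr{L}(\Hcal)}$ and $\sup_\ell\|\sigma(\ell)\|_{\mathscr{L}(H_\ell)}$ separately, exploiting the block-diagonal structure of $T$ encoded in property (A) of Theorem~\ref{THM:inv-rem} together with the Plancherel identity \eqref{EQ:Plancherel}.

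For the ``only if'' direction, suppose $T$ already extends to a bounded operator on $\Hcal$, still called $T$. Since $e_j^k\in\Hcal^\infty$, the finite-dimensional subspace $H_j$ lies in the domain of $T$, and property (A) gives $T(H_j)\subset H_j$. The formula \eqref{symbinv} shows that the matrix of the restriction $T|_{H_j}$ in the orthonormal basis $\{e_j^1,\dots,e_j^{d_j}\}$ is precisely $\sigma(j)$. Therefore
\[
\|\sigma(j)\|_{\mathscr{L}(H_j)}=\|T|_{H_j}\|_{\mathscr{L}(H_j)}\leq \|T\|_{\mathscr{L}(\Hcal)},
\]
and taking the supremum over $j\in\N_0$ yields one inequality; in particular, finiteness of the supremum is a necessary condition for boundedness.

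For the ``if'' direction, suppose $M:=\sup_\ell\|\sigma(\ell)\|_{\mathscr{L}(H_\ell)}<\infty$. Invoking property (C) of Theorem~\ref{THM:inv-rem}, we have the pointwise identity $\widehat{Tf}(\ell)=\sigma(\ell)\widehat{f}(\ell)$ for every $f\in\Hcal^\infty$ and every $\ell\in\N_0$. Applying \eqref{EQ:Plancherel} to $Tf$ and estimating the matrix-vector norms coordinatewise,
\[
\|Tf\|_\Hcal^2=\sum_{\ell=0}^\infty|\sigma(\ell)\widehat{f}(\ell)|^2\leq\sum_{\ell=0}^\infty\|\sigma(\ell)\|_{\mathscr{L}(H_\ell)}^2\,|\widehat{f}(\ell)|^2\leq M^2\|f\|_\Hcal^2.
\]
Thus $T$ is bounded by $M$ on the dense subspace $\Hcal^\infty$, so it extends uniquely to a bounded operator on $\Hcal$ with $\|T\|_{\mathscr{L}(\Hcal)}\leq M$. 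Combined with the previous step, this yields the desired equality.

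The main point that requires care is the passage from the ``symbol-on-basis'' identity (B), which only describes $Te_j^k$, to the ``symbol-on-functions'' identity (C) needed to bound $\|Tf\|_\Hcal$ above. This is exactly the step handled by the domain condition $e_j^k\in\mathrm{Dom}(T^*)$ in Theorem~\ref{THM:inv-rem}; since we cannot a priori commute $T$ with the infinite Plancherel expansion of a general $f\in\Hcal^\infty$, (C) is the essential tool. Once (C) is granted, the quantitative estimate is a one-line application of Plancherel plus an operator-norm bound, and the matching lower bound from the restriction argument is immediate from finite-dimensional linear algebra.
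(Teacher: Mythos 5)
Your proof is correct and follows essentially the same route as the paper's: the lower bound comes from identifying $\sigma(j)$ with the matrix of $T|_{H_j}$ in the orthonormal basis of $H_j$ (using property (A) and \eqref{symbinv}), and the upper bound comes from property (C) together with the Plancherel identity \eqref{EQ:Plancherel}, followed by extension by density. You also correctly flag the one genuinely delicate point, namely that the passage from the action on basis vectors in (B) to the multiplier identity (C) on all of $\Hcal^{\infty}$ is what the domain condition $e_j^k\in\mathrm{Dom}(T^*)$ in Theorem \ref{THM:inv-rem} is for, which is exactly how the argument is meant to run.
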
 

We also record the formula for the symbol of the composition of two invariant
operators:

\begin{prop}\label{comp1-abstract} 
If $S,T:\Hcal^{\infty}\to \Hcal$ are invariant operators with respect to the same 
orthonormal partition, and such that the domain of $S\circ T$ contains $\Hcal^{\infty}$,
then $S\circ T:\Hcal^{\infty}\to \Hcal$ is also 
invariant with respect to the same partition. Moreover, if $\sigma_S$ denotes the symbol of 
$S$ and $\sigma_T$ denotes the symbols of $T$ 
with respect to  the same orthonormal basis then 
\[\sigma_{S\circ T}=\sigma_S\sigma_T,\]
i.e. $\sigma_{S\circ T}(j)=\sigma_S(j)\sigma_T(j)$ for all $j\in\N_{0}.$
\end{prop}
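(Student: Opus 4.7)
The plan is to prove invariance of $S\circ T$ via condition (A) of Theorem \ref{THM:inv-rem}, and then read off the symbol of the composition by expanding basis vectors twice and recognising the result as matrix multiplication. Everything is essentially bookkeeping once we use \eqref{symbinv} together with the invariance $T(H_j)\subset H_j$ and $S(H_j)\subset H_j$.

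First I would verify that $S\circ T$ is invariant in the sense of (A). Since $e_j^k\in\Hcal^\infty$ for all $j,k$, the finite-dimensional space $H_j$ is contained in $\Hcal^\infty$. By invariance of $T$ we have $T(H_j)\subset H_j\subset\Hcal^\infty$, so $S$ may legitimately be applied to elements of $T(H_j)$, and the assumption that $\Hcal^\infty$ is contained in the domain of $S\circ T$ is consistent with this. Then by invariance of $S$, $S(T(H_j))\subset S(H_j)\subset H_j$, which is property (A) for $S\circ T$. Hence Theorem \ref{THM:inv-rem} produces a matrix symbol $\sigma_{S\circ T}(j)\in\ce^{d_j\times d_j}$.

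Next I would compute this symbol using the formula \eqref{symbinv}, namely $\sigma_{S\circ T}(j)_{mk}=\widehat{(S\circ T)e_j^k}(j,m)$. Property (B) applied to $T$ gives the expansion
\[
Te_j^k=\sum_{l=1}^{d_j}\sigma_T(j)_{lk}\,e_j^l,
\]
since $\widehat{Te_j^k}(\ell,l)=\sigma_T(\ell)_{lk}\delta_{j\ell}$. Applying $S$ linearly and using the analogous expansion for $Se_j^l$, we obtain
\[
(S\circ T)e_j^k=\sum_{l=1}^{d_j}\sigma_T(j)_{lk}\,Se_j^l=\sum_{l,m=1}^{d_j}\sigma_S(j)_{ml}\,\sigma_T(j)_{lk}\,e_j^m.
\]
Taking the $(j,m)$ Fourier coefficient and using orthonormality of $\{e_j^m\}$ yields
\[
\sigma_{S\circ T}(j)_{mk}=\sum_{l=1}^{d_j}\sigma_S(j)_{ml}\,\sigma_T(j)_{lk}=\bigl(\sigma_S(j)\sigma_T(j)\bigr)_{mk},
\]
which is precisely the claim $\sigma_{S\circ T}(j)=\sigma_S(j)\sigma_T(j)$.

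There is no serious obstacle here; the only point requiring a moment of care is the domain issue, i.e.\ confirming that the composition $S\circ T$ is actually defined on each $e_j^k$ so that the symbol formula \eqref{symbinv} applies. This is automatic from the hypothesis that $\Hcal^\infty\subset{\rm Dom}(S\circ T)$ together with $H_j\subset\Hcal^\infty$, so the computation above is rigorous and the proof is complete.
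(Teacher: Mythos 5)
Your proof is correct, and it follows essentially the argument the paper intends (the survey defers the details to \cite{dr16:fmm}): verify property (A) for $S\circ T$ using $T(H_j)\subset H_j\subset\Hcal^{\infty}$ and $S(H_j)\subset H_j$, then compute the symbol from \eqref{symbinv} by expanding $Te_j^k$ and $Se_j^l$ in the basis, which yields the matrix product $\sigma_S(j)\sigma_T(j)$. Your handling of the domain hypothesis is also the right observation: it guarantees $S\circ T$ is defined on all of $\Hcal^{\infty}$, while the symbol computation itself only needs it on the finite-dimensional spaces $H_j$, where it is automatic.
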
 

We now show another application of 
the above notions to give a characterisation of Schatten classes of invariant operators in
terms of their symbols. 

\begin{thm}\label{schchr-abstract} Let $0<r<\infty$. 
An invariant operator $T\in {\mathscr L}(\Hcal)$ with symbol 
$\sigma$ is in the Schatten class
$S_r(\Hcal)$ 
if and only if  $$\sum\limits_{\ell=0}^{\infty}\|\sigma(\ell)\|_{S_r(H_{\ell})}^r<\infty.$$ 
Moreover
\begin{equation}\label{EQ:Sch1}
\|T\|_{S_r(\Hcal)}=\p{\sum\limits_{\ell=0}^{\infty}\|\sigma(\ell)\|_{S_r(H_{\ell})}^r}^{1/r}.
\end{equation}
In particular, if $T$ is in the trace class
$S_1(\Hcal)$, then we have the trace formula
\begin{equation}\label{EQ:Sch2}
\Tr(T)=\sum\limits_{\ell=0}^{\infty}\Tr(\sigma(\ell)).
\end{equation}
\end{thm}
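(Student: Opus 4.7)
The plan is to exploit the fact that an invariant operator $T$ is a block-diagonal operator with respect to the decomposition $\Hcal=\bigoplus_j H_j$, reducing the computation of singular values to the finite-dimensional blocks $\sigma(j)$.

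First, I would show that if $T$ is invariant, so are $T^{*}$, $T^{*}T$, and $|T|=(T^{*}T)^{1/2}$. For $T^{*}$ this follows by observing that the $H_j$'s are mutually orthogonal and $T$-invariant, so for $x\in H_i$, $y\in H_j$ with $i\neq j$ we have $(T^{*}x,y)=(x,Ty)=0$ since $Ty\in H_j$; hence $T^{*}(H_j)\subset H_j$. Proposition \ref{comp1-abstract} then yields that $T^{*}T$ is invariant, and since $|T|$ is obtained via the spectral calculus on the positive invariant operator $T^{*}T$, it too preserves each $H_j$ (alternatively, one can argue directly that $|T|^2=T^{*}T$ is block-diagonal, so its unique positive square root is block-diagonal).

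The key structural consequence is the unitary equivalence
\[
T \;=\; \bigoplus_{j\in\N_{0}} T_j, \qquad T_j := T|_{H_j},
\]
where, by \eqref{symbinv2}, the matrix of $T_j$ in the basis $\{e_j^k\}_{k=1}^{d_j}$ is exactly $\sigma(j)$. Consequently $|T|=\bigoplus_j |T_j|$, and we can build an orthonormal basis of $\Hcal$ consisting of eigenvectors of $|T|$ by concatenating, for each $j$, an orthonormal basis of $H_j$ consisting of eigenvectors of $|T_j|$. The multiset of eigenvalues of $|T|$ is therefore the disjoint union (with multiplicities) of the eigenvalues of the $|T_j|$'s. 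Since the singular values of $T_j$ on $H_j$ coincide with the singular values of the matrix $\sigma(j)$, we obtain, for any $0<r<\infty$,
\[
\sum_{k=1}^{\infty} s_k(T)^r \;=\; \sum_{j=0}^{\infty}\sum_{k=1}^{d_j} s_k(T_j)^r \;=\; \sum_{j=0}^{\infty}\|\sigma(j)\|_{S_r(H_j)}^{r}.
\]
This simultaneously proves the characterisation $T\in S_r(\Hcal)$ iff the right-hand side is finite, and the norm identity \eqref{EQ:Sch1}.

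For the trace formula \eqref{EQ:Sch2}, once $T\in S_1(\Hcal)$ the Fourier basis $\{e_j^k\}$ may be used to compute $\Tr(T)$ because it is an orthonormal basis of $\Hcal$; indeed
\[
\Tr(T) \;=\; \sum_{j=0}^{\infty}\sum_{k=1}^{d_j} (Te_j^k,e_j^k)_{\Hcal} \;=\; \sum_{j=0}^{\infty}\sum_{k=1}^{d_j}\sigma(j)_{kk} \;=\; \sum_{j=0}^{\infty}\Tr(\sigma(j)),
\]
by \eqref{symbinv2}, with absolute convergence justified by the $S_1$ membership. The main point requiring care is the first step: verifying that $|T|$ preserves the partition, so that one may genuinely reduce the computation of singular values to the finite-dimensional blocks. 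Everything else is a direct summation argument.
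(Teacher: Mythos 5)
Your argument is correct and is essentially the approach behind the paper's proof: an invariant bounded operator is block-diagonal with respect to $\Hcal=\bigoplus_j H_j$, so $T^{*}$, $T^{*}T$ and $|T|$ are again invariant with $|T|\big|_{H_j}=|T_j|$, and the singular values of $T$ are exactly the union over $j$ of the singular values of the matrices $\sigma(j)$, which yields \eqref{EQ:Sch1} and, via basis-independence of the trace and \eqref{symbinv2}, the trace formula \eqref{EQ:Sch2}. The only point worth making explicit is that when the symbol-side sum is finite one should note that the block-diagonal $|T|$, having an eigenbasis with $r$-summable (hence decreasing-to-zero) eigenvalues, is compact, so that the singular value identity indeed gives membership in $S_r(\Hcal)$ and not just the norm equality for operators already known to be compact.
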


\begin{rem}\label{REM:torus}
We note that the membership in ${\mathscr L}(\Hcal)$ and in the Schatten classes
$S_{r}(\Hcal)$ does not depend on the decomposition of $\Hcal$ into subspaces $H_{j}$
as in \eqref{EQ:sum}. However, the notion of invariance does depend on it.
For example, let $\Hcal=L^{2}(\Tn)$ for the $n$-torus $\Tn=\Rn/\Zn$. 
Choosing 
$$H_{j}={\rm span}\{ e^{2\pi{\rm  i} j\cdot x} \}, \quad j\in\Zn,$$ 
we recover the classical notion of invariance on compact Lie groups
and moreover, invariant operators
with respect to $\{H_{j}\}_{j\in\Zn}$ are the translation invariant operators on the torus
$\Tn$. However, to recover the construction of Section \ref{SEC:invariant}
on manifolds, we take
$\widetilde{H_{\ell}}$ to be the eigenspaces of the Laplacian $E$ on $\Tn$, so that
$$
\widetilde{H_{\ell}}=\bigoplus_{|j|^{2}=\ell} H_{j}=
{\rm span}\{e^{2\pi{\rm  i} j\cdot x}:\; j\in\Zn \textrm{ and }
|j|^{2}=\ell\},
\quad \ell\in\N_{0}.
$$ 
Then translation invariant operators on $\Tn$, i.e. operators invariant
relative to the partition $\{H_{j}\}_{j\in\Zn}$, are also invariant relative to the partition
$\{\widetilde{H_{\ell}}\}_{\ell\in\N_{0}}$ 
(or relative to the Laplacian, in terminology of Section \ref{SEC:invariant}).

If we have information on the eigenvalues of
$E$, like we do on the torus, we may sometimes also
recover invariant operators relative to the partition
$\{\widetilde{H_{\ell}}\}_{\ell\in\N_{0}}$ as linear combinations of translation invariant
operators composed with phase shifts and complex conjugation.
\end{rem}

\section{Fourier analysis associated to an elliptic operator}
\label{SEC:Fourier}

One of the main applications of the described setting is to study operators on compact manifolds, so we start this
section by describing the discrete Fourier analysis associated to an 
elliptic positive pseudo-differential operator as an adaptation of the construction in
Section \ref{SEC:abstract}. In order to fix the further notation  
we give some explicit expressions for notions of Section \ref{SEC:abstract}
in this setting.

Let $M$ be a compact smooth manifold of dimension $n$ without boundary, endowed with a fixed volume $dx$.  
 We denote by $\Psi^{\nu}(M)$ the H\"ormander class of pseudo-differential 
 operators of order $\nu\in\er$,
 i.e. operators which, in every coordinate chart, are operators in H\"ormander classes 
 on $\Rn$ with symbols
 in $S^\nu_{1,0}$, see e.g. \cite{shubin:r} or \cite{rt:book}.
 For simplicity we may be using the class  $\Psi^{\nu}_{cl}(M)$ of classical operators, i.e. operators
 with symbols having (in all local coordinates) an asymptotic expansion of the symbol in
 positively homogeneous components (see e.g. \cite{Duis:BK-FIO-2011}).
 Furthermore, we denote by $\Psi_{+}^{\nu}(M)$ the class of positive definite operators in 
 $\Psi^{\nu}_{cl}(M)$,
 and by $\Psi_{e}^{\nu}(M)$ the class of elliptic operators in $\Psi^{\nu}_{cl}(M)$. Finally, 
 $$\Psi_{+e}^{\nu}(M):=\Psi_{+}^{\nu}(M)\cap \Psi_{e}^{\nu}(M)$$ 
 will denote the  class of classical positive elliptic 
 pseudo-differential operators of order $\nu$.
 We note that complex powers of such operators are well-defined, see e.g.
 Seeley \cite{Seeley:complex-powers-1967}. 
 In fact, all pseudo-differential operators considered from now on will be classical, so we may omit explicitly mentioning it every time, but we note
 that we could equally work with general operators in $\Psi^{\nu}(M)$ since their
 powers have similar properties, see e.g. \cite{Strichartz:functional-calculus-AJM-1972}.
 
We now associate a discrete Fourier analysis to the operator 
$E\in\Psi_{+e}^{\nu}(M)$ inspired 
by those constructions
considered by Seeley (\cite{see:ex}, \cite{see:exp}), see also 
Greenfield and Wallach \cite{Greenfield-Wallach:hypo-TAMS-1973}. 
However, we adapt it to our purposes and in the sequel also
indicate several auxiliary statements concerning the
eigenvalues of $E$ and their multiplicities, useful to us in the subsequent analysis.
In general, the construction below is exactly the one appearing in
Theorem \ref{THM:inv-rem} with a particular choice of a partition. 

The eigenvalues of $E$ (counted without multiplicities)
form a sequence $\{\lambda_j\}$ which we order so that
\begin{equation}\label{EQ:lambdas}
0=\lambda_{0}<\lambda_{1}<\lambda_{2}<\cdots.
\end{equation}
For each eigenvalue $\lambda_j$, there is
the corresponding finite dimensional eigenspace $H_j$ of functions on $M$, which are smooth due to the 
ellipticity of $E$. We set 
$$
d_j:=\dim H_j, 
\textrm{ and } H_0:=\ker E, \; \lambda_0:=0.
$$
We also set $d_{0}:=\dim H_{0}$. Since the operator $E$ is elliptic, it is Fredholm,
hence also $d_{0}<\infty$ (we can refer to \cite{Atiyah:global-aspects-1968}, \cite{ho:apde2} for
various properties of $H_{0}$ and $d_{0}$).

We fix  an orthonormal basis of $L^2(M)$ consisting of eigenfunctions of $E$:
\beq\label{fam}\{e^k_j\}_{j\geq 0}^{1\leq k\leq d_j},\eq 
where $\{e^k_j\}^{1\leq k\leq d_j}$ is an orthonormal basis of $H_j$. 
Let $P_j:L^2(M)\rightarrow H_j$ be the corresponding projection. 
We shall denote by $(\cdot,\cdot)$ the inner product of $L^2(M)$. 
 We observe that we have
 \[P_jf=\sum\limits_{k=1}^{d_j}(f,e_j^k) e_j^k,\]
for $f\in L^2(M)$. The `Fourier' series takes the form 
\[f=\sum\limits_{j=0}^{\infty}\sum\limits_{k=1}^{d_j}(f,e_j^k )e_j^k,\]
for each $f\in L^2(M)$.
The Fourier coefficients of $f\in L^2(M)$ with respect to the orthonormal basis $\{e^k_j\}$ 
will be denoted by 
\begin{equation}\label{EQ:F-coeff}
(\efee f)(j,k):=\widehat{f}(j,k):=(f,e_j^k).
\end{equation}
We will call the collection of $\widehat{f}(j,k)$ the {\em Fourier coefficients of $f$ relative to $E$},
or simply the {\em Fourier coefficients of $f$}.

\smallskip
Since $\{e^k_j\}_{j\geq 0}^{1\leq k\leq d_j}$ forms a complete orthonormal 
system in  $L^2(M)$, for all $f\in L^2(M)$ we have the Plancherel formula
\eqref{EQ:Plancherel}, namely,
\beq \label{EQ:Plancherel2}
\|f\|^2_{L^{2}(M)}=\sum\limits_{j=0}^{\infty}\sum\limits_{k=1}^{d_j}|( f,e_j^k)|^2
=  \sum\limits_{j=0}^{\infty}\sum\limits_{k=1}^{d_j}|\widehat{f}(j,k)|^{2}
=\|\widehat{f}\|^{2}_{\ell^2(\ene_0,\Sigma)},
\eq
where the space $\ell^2(\ene_0,\Sigma)$ and its norm are as in
\eqref{EQ:aux3} and \eqref{EQ:aux4}.

We can think of $\efee=\efee_{M}$ as of the Fourier transform being an isometry
from $L^2(M)$ into 
$\ell^2(\ene_0,\Sigma)$. The inverse of this Fourier transform can be then expressed by 
\beq\label{Fourier1inv}
(\efee^{-1}{h})(x)=\sum\limits_{j= 0}^{\infty}\sum\limits_{k=1}^{d_j}h(j,k)e_j^k(x).
\eq
If $f\in L^2(M)$, we also write
\[\widehat{f}(j)=\left(\begin{array}{c}\widehat{f}(j,1)\\
\vdots\\
\widehat{f}(j,d_j)\end{array} \right)\in\ce^{d_j},\]
thus thinking of the Fourier transform always as a column vector.
In particular, we think of 
\[\widehat{e_j^k}(\ell)=\left(\widehat{e_j^k}(\ell,m)\right)_{m=1}^{d_{\ell}}\]
as of a column, and we notice that
\beq
\widehat{e_j^k}(\ell,m)=\delta_{j\ell}\delta_{km}.
\label{eqdeltas}
\eq
Smooth functions on $M$ can be characterised by
\begin{align}\label{EQ:smooth}
f\in C^{\infty}(M)  & \Longleftrightarrow 
\forall N \; \exists C_{N}: \; |\widehat{f}(j,k)|\leq C_{N} (1+\lambda_{j})^{-N}
\textrm{ for all } j, k \\ \nonumber
& \Longleftrightarrow 
\forall N \; \exists C_{N}: \; |\widehat{f}(j)|\leq C_{N} (1+\lambda_{j})^{-N}
\textrm{ for all } j,
\end{align}
where $|\widehat{f}(j)|$ is the norm of the vector $\widehat{f}(j)\in\C^{d_{j}}.$
The implication `$\Longleftarrow$' here is immediate, while `$\Longrightarrow$'
follows from the Plancherel formula \eqref{EQ:Plancherel} and the fact that
for $f\in C^{\infty}(M)$ we have $(I+E)^{N}f\in L^{2}(M)$ for any $N$.

\smallskip
For $u\in \Dcal'(M)$, we denote its Fourier coefficient 
$$\widehat{u}(j,k):=u(\overline{e_{j}^{k}}),$$ and by duality, the space of distributions
can be characterised by
$$
f\in \Dcal'(M)   \Longleftrightarrow 
\exists M \; \exists C: \; |\widehat{u}(j,k)|\leq C(1+\lambda_{j})^{M}
\textrm{ for all } j, k.
$$

We will denote by $H^{s}(M)$ the usual Sobolev space over $L^{2}$ on $M$.
This space can be defined in local coordinates or, by the fact that 
$E\in \Psi^{\nu}_{+e}(M)$ is positive and elliptic with $\nu>0$, it can be
characterised by
\begin{multline}\label{EQ:Sob-char}
 f\in H^{s}(M) \Longleftrightarrow (I+E)^{s/\nu} f\in L^{2}(M)
 \Longleftrightarrow
 \{(1+\lambda_{j})^{s/\nu}\widehat{f}(j)\}_{j} \in \ell^{2}(\N_{0},\Sigma) \\
 \Longleftrightarrow
 \sum\limits_{j=0}^{\infty}\sum\limits_{k=1}^{d_j}
 (1+\lambda_{j})^{2s/\nu}|\widehat{f}(j,k)|^{2}<\infty ,
\end{multline}
the last equivalence following from the Plancherel formula
\eqref{EQ:Plancherel}.
For the characterisation of analytic functions (on compact manifolds $M$) 
we refer to Seeley \cite{see:exp} and \cite{dr:eig}.


\section{Invariant operators and symbols on compact manifolds}
\label{SEC:invariant}

We now discuss an application of a notion of an invariant operator and of its symbol
from Theorem \ref{THM:inv-rem} in the case of 
$\Hcal=L^{2}(M)$ and $\Hcal^{\infty}=C^{\infty}(M)$ and describe its basic
properties. 
We will consider operators $T$ densely defined on $L^{2}(M)$, and we will be making
a natural assumption that their domain contains $C^{\infty}(M)$.
We also note that while in Theorem \ref{THM:inv-rem2} it was assumed that the
operator $\En$ is bounded on $\Hcal$, this is no longer the case for the operator $E$ here.
Indeed, an elliptic  pseudo-differential operator $E\in\Psi_{+e}^{\nu}(M)$ of order
$\nu>0$ is not bounded on $L^{2}(M)$. 

Moreover, we do not want to assume that $T$ extends to a bounded operator on 
$L^{2}(M)$
to obtain analogues of properties (D) and (F) in Section \ref{SEC:abstract},
because this is too restrictive from the point of view of differential operators.
Instead, we show that in the present setting it is enough to assume that
$T$ extends to a continuous operator on $\Dcal'(M)$ to reach the same
conclusions.

So, we combine the statement of Theorem \ref{THM:inv-rem} and the necessary 
modification of Theorem \ref{THM:inv-rem2} to the setting of
Section \ref{SEC:Fourier} as follows.

We also remark that Part (iv) of the following theorem provides a correct formulation for
a missing assumption in \cite[Theorem 3.1, (iv)]{Delgado-Ruzhansky:CRAS-kernels}.

\begin{thm}\label{THM:inv}
Let $M$ be a closed manifold and 
let $T:\cinfm\to L^{2}(M)$
be a linear operator.
Then the following
conditions are equivalent:
\begin{itemize}
\item[(i)] For each $j\in\ene_0$, we have $T(H_j)\subset H_j$. 
\item[(ii)]
For each $j\in\ene_{0}$ and $1\leq k\leq j$, we have
$TE e_{j}^{k}=ET e_{j}^{k}.$
\item[(iii)] For each $\ell\in\ene_0$ there exists a matrix 
$\sigma(\ell)\in\ce^{d_{\ell}\times d_{\ell}}$ such that for all $e_j^k$ 
\beq\label{invadef}\widehat{Te_j^k}(\ell,m)=\sigma(\ell)_{mk}\delta_{j\ell}.\eq
\item[(iv)]  If, in addition, the domain of $T^*$ contains $C^\infty(M)$, then for each $\ell\in\ene_0 $ there exists a matrix 
$\sigma(\ell)\in\ce^{d_{\ell}\times d_{\ell}}$ such that
 \[\widehat{Tf}(\ell)=\sigma(\ell)\widehat{f}(\ell)\]
 for all $f\in\cinfm.$
\end{itemize}
The matrices $\sigma(\ell)$ in {\rm (iii)} and {\rm (iv)} coincide.

\smallskip
If $T$
extends to a linear continuous operator
$T:\Dcal'(M)\rightarrow \Dcal'(M)$ then
the above properties are also equivalent to the following ones:
\begin{itemize}
\item[(v)] For each $j\in\ene_0$, we have
$TP_j=P_jT$ on $C^{\infty}(M)$.
\item[(vi)]  $TE=ET$ on $L^{2}(M)$.
\end{itemize}
\end{thm}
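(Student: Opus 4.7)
The plan is to deduce the first three equivalences directly from Theorem~\ref{THM:inv-rem}, and then handle the unbounded operator $E$ by reading the remaining identities in $\Dcal'(M)$. Apply Theorem~\ref{THM:inv-rem} with $\Hcal=L^{2}(M)$, $\Hcal^{\infty}=C^{\infty}(M)$, and the partition $\{H_{j}\}_{j\in\ene_{0}}$ given by the eigenspaces of $E$. Elliptic regularity for $E\in\Psi^{\nu}_{+e}(M)$ guarantees that each $e_{j}^{k}$ lies in $C^{\infty}(M)$, so the hypotheses of that theorem are met, and conditions (i), (iii), (iv) coincide word for word with conditions (A), (B), (C) there (the extra assumption in (iv) being the very one required in (C)). Thus (i)$\Leftrightarrow$(iii)$\Leftrightarrow$(iv) requires no additional work.

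For (i)$\Leftrightarrow$(ii) I would exploit two features of $E$: as an elliptic pseudo-differential operator it extends to a continuous map $E:\Dcal'(M)\to\Dcal'(M)$, and by \eqref{EQ:lambdas} its eigenvalues $\lambda_{j}$ are pairwise distinct. The direction (i)$\Rightarrow$(ii) is immediate, since when $Te_{j}^{k}\in H_{j}\subset C^{\infty}(M)$ both sides of the identity equal $\lambda_{j}Te_{j}^{k}$. For (ii)$\Rightarrow$(i), interpreting the identity in $\Dcal'(M)$ shows that $Te_{j}^{k}$ is a $\lambda_{j}$-eigendistribution of $E$; elliptic regularity then makes it smooth, and because the $\lambda_{i}$'s are pairwise distinct and $H_{j}$ is the full $\lambda_{j}$-eigenspace, $Te_{j}^{k}\in H_{j}$. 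This is the natural analogue for an unbounded $E$ of the step based on distinctness of eigenvalues in Theorem~\ref{THM:inv-rem2}.

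Under the additional hypothesis that $T$ extends continuously to $T:\Dcal'(M)\to\Dcal'(M)$, I would prove (i)$\Leftrightarrow$(v)$\Leftrightarrow$(vi). The implication (v)$\Rightarrow$(i) is obtained by evaluating on basis vectors: $Te_{j}^{k}=TP_{j}e_{j}^{k}=P_{j}Te_{j}^{k}\in H_{j}$. For (i)$\Rightarrow$(v), take $f\in C^{\infty}(M)$; by \eqref{EQ:smooth} its Fourier series converges to $f$ in $C^{\infty}(M)$ and hence in $\Dcal'(M)$, so continuity of $T$ on $\Dcal'(M)$ gives $Tf=\sum_{\ell,m}\widehat{f}(\ell,m)Te_{\ell}^{m}$ in $\Dcal'(M)$; since each $P_{j}$ is a continuous finite-rank projection on $\Dcal'(M)$, property (i) then yields
\[
P_{j}Tf=\sum_{\ell,m}\widehat{f}(\ell,m)P_{j}Te_{\ell}^{m}=\sum_{m=1}^{d_{j}}\widehat{f}(j,m)Te_{j}^{m}=TP_{j}f.
\]
For (vi)$\Leftrightarrow$(ii), both $TE$ and $ET$ are continuous operators on $\Dcal'(M)$; by (ii) they agree on each $e_{j}^{k}$, and finite linear combinations of the $e_{j}^{k}$ are dense in $\Dcal'(M)$, so they agree everywhere, giving (vi). The converse is (vi) specialised to $e_{j}^{k}$.

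The main obstacle, and the point at which the present statement genuinely departs from Theorems~\ref{THM:inv-rem} and \ref{THM:inv-rem2}, is that $E$ is unbounded on $L^{2}(M)$, so the identities in (ii), (v), (vi) cannot be manipulated using bounded operator calculus on $\Hcal$. The correct substitute is to regard both $E$ and $T$ as continuous operators on $\Dcal'(M)$; elliptic regularity then forces $\lambda_{j}$-eigendistributions of $E$ back into the fixed finite-dimensional space $H_{j}\subset C^{\infty}(M)$, which supplies the rigidity the argument needs.
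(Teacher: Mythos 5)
Your proposal is correct and follows essentially the same route as the paper: (i), (iii), (iv) are obtained by specialising Theorem~\ref{THM:inv-rem} to $\Hcal=L^{2}(M)$, $\Hcal^{\infty}=C^{\infty}(M)$ with the eigenspace partition, while (ii), (v), (vi) are handled exactly as intended by replacing boundedness on $\Hcal$ with continuity of $T$ (and of $E$, $P_j$) on $\Dcal'(M)$, using elliptic regularity and the fact that $H_j$ is the full $\lambda_j$-eigenspace to force $Te_j^k\in H_j$.
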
 

If any of the equivalent conditions (i)--(iv) of Theorem \ref{THM:inv} are satisfied, 
we say that the operator $T:\cinfm\rightarrow L^{2}(M)$ 
is {\em invariant (or is a Fourier multiplier) relative to $E$}.
We can also say 
that $T$ is $E$-invariant or is an $E$-multiplier. 
This recovers the notion of invariant operators 
given by Theorem \ref{THM:inv-rem}, with respect to the partitions
$H_{j}$'s in \eqref{EQ:sum} which are fixed being the eigenspaces of $E$.
When there is no risk of confusion we will just refer to such kind of operators 
as invariant operators or as Fourier multipliers.
It is clear from (i) that the operator $E$ itself or functions of $E$ defined
by the functional calculus are invariant relative to $E$. 

We note that the boundedness of $T$ on $L^{2}(M)$ needed for conditions
(D) and (F) in Theorem \ref{THM:inv-rem} and in Theorem \ref{THM:inv-rem2}
is now replaced by the condition that $T$ is continuous on $\Dcal'(M)$ which
explored the additional structure of $L^{2}(M)$ and allows application to
differential operators.

We call $\sigma$ in (iii) and (iv) the {\em matrix symbol of $T$} or simply 
the {\em symbol}. It is an element of the space $\Sigma=\Sigma_{M}$ defined by
\begin{equation}\label{EQ:Sigma}
\Sigma_M:=\{\sigma:\N_{0}\ni\ell\mapsto\sigma(\ell)\in \ce^{d_{\ell}\times d_{\ell}}\}.
\end{equation}
Since the expression for the symbol depends only on the basis $e_{j}^{k}$ and not
on the operator $E$ itself, this notion coincides with the symbol defined in
Theorem \ref{THM:inv-rem}.

Let us comment on several conditions in Theorem \ref{THM:inv} in this setting.
Assumptions (v) and (vi) are stronger than those in (i)--(iv).
On one hand, clearly (vi) contains (ii). On the other hand, 
it can be shown that assumption (v) implies (i) without the 
additional hypothesis that $T$ is continuous on $\Dcal'(M)$.

In analogy to the strong commutativity in (v), {\em if $T$ is continuous on $\Dcal'(M)$},
so that all the assumptions (i)--(vi) are equivalent, we may say that
$T$ is {\em strongly invariant relative to $E$} in this case.

The expressions in (vi) make sense as both sides are defined (and even continuous) on
$\Dcal'(M)$. 

We also note that without additional assumptions, it is known from the
general theory of densily defined operators on Hilbert spaces that conditions
(v) and (vi) are generally not equivalent, see e.g.
Reed and Simon \cite[Section VIII.5]{r-s:vol1}.
If $T$ is a differential operator, the additional assumption of continuity
on $\Dcal'(M)$ for 
parts (v) and (vi) is satisfied.  In
\cite[Section 1, Definition 1]{Greenfield-Wallach:hypo-TAMS-1973} 
Greenfield and Wallach
called a differential operator $D$ to be an $E$-invariant operator if  $ED=DE$,
which is our condition (vi). However, Theorem  \ref{THM:inv} describes more
general operators as well as reformulates them in the form of Fourier multipliers
that will be explored in the sequel.

There will be several useful classes of symbols, in particular the moderate growth
class
\begin{equation}\label{EQ:sym-tempe}
{\mathcal S}'(\Sigma):=\{\sigma\in\Sigma:
\exists N, C \textrm{ such that }
\|\sigma(\ell)\|_{op}\leq C(1+\lambda_{\ell})^{N} \; \forall \ell\in\N_{0}
\},
\end{equation}
where $$ \|\sigma(\ell)\|_{op}=\|\sigma(\ell)\|_{{\mathscr L}(H_{\ell})}$$ 
denotes the matrix multiplication
operator norm with respect to $\ell^2(\ce^{d_{\ell}})$.

In the case when $M$ is a compact Lie group and $E$ is a Laplacian on $G$,
left-invariant operators on $G$, i.e. operators commuting with the left action of $G$,
are also invariant relative to $E$ in the sense of Theorem \ref{THM:inv}.
 However, we need an adaptation of the above construction
since the natural decomposition into $H_{j}$'s in \eqref{EQ:sum} may in general violate the condition \eqref{EQ:lambdas}.

As in Section \ref{SEC:abstract} since the notion of the symbol depends only
on the basis, for the identity operator $T=I$ we have $$\sigma_{I}(j)=I_{d_{j}},$$
where $I_{d_{j}}\in \C^{I_{d_{j}}\times I_{d_{j}}}$ is the identity matrix, and 
for an operator $T=F(E)$, when it is well-defined by the spectral calculus, we have
\begin{equation}\label{EQ:symbol-F}
\sigma_{F(E)}(j)=F(\lambda_{j}) I_{d_{j}}.
\end{equation}

We now discuss how invariant operators can be expressed in terms of their symbols.

\begin{prop}\label{PROP:quant}
An invariant operator $T_{\sigma}$ associated to the symbol $\sigma$ can be written 
in the following way:
\begin{align}
T_{\sigma}f(x)=&\sum\limits_{\ell=0}^{\infty}\sum\limits_{m=1}^{d_{\ell}}(\sigma(\ell)\widehat{f}(\ell))_me_{\ell}^m(x)\label{form23}\\
=&\sum\limits_{\ell=0}^{\infty}[\sigma(\ell)\widehat{f}(\ell)]^{\top} e_{\ell}(x),\nonumber
\end{align}
where $[\sigma(\ell)\widehat{f}(\ell)]$ denotes the column-vector, and 
$[\sigma(\ell)\widehat{f}(\ell)]^{\top}e_{\ell}(x)$ denotes the multiplication
(the scalar product)
of the column-vector $[\sigma(\ell)\widehat{f}(\ell)]$ with the column-vector
$e_{\ell}(x)=(e_{\ell}^{1}(x),\cdots, e_{\ell}^{m}(x))^{\top}$.
In particular, we also have
\begin{equation}\label{EQ:Tsigma-e}
(T_{\sigma}e_{j}^{k})(x)=\sum\limits_{m=1}^{d_{j}} \sigma(j)_{mk}e_{j}^{m}(x).
\end{equation}
If $\sigma\in {\mathcal S}'(\Sigma)$ and $f\in C^{\infty}(M)$, the convergence in
\eqref{form23} is uniform.
\end{prop}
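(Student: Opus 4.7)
The strategy is to first establish the single-basis-element identity \eqref{EQ:Tsigma-e}, then derive the general formula \eqref{form23} by expanding $f$ in the orthonormal basis and using the Fourier-multiplier characterization in part (iv) of Theorem \ref{THM:inv}, and finally to verify uniform convergence under the smoothness and moderate-growth hypotheses.

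First I would prove \eqref{EQ:Tsigma-e}. By property (i) of Theorem \ref{THM:inv}, $T_\sigma e_j^k \in H_j$, so it admits the finite expansion $T_\sigma e_j^k = \sum_{m=1}^{d_j}(T_\sigma e_j^k, e_j^m)\, e_j^m$. Formula \eqref{symbinv2} identifies the inner products with $\sigma(j)_{mk}$, which gives \eqref{EQ:Tsigma-e} immediately. This step is a direct unpacking of the definitions.

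Second, for general $f\in C^\infty(M)$, I would invoke property (iv) of Theorem \ref{THM:inv} to write $\widehat{T_\sigma f}(\ell)=\sigma(\ell)\widehat{f}(\ell)$, and then apply the Fourier inversion formula \eqref{Fourier1inv} to $T_\sigma f\in L^2(M)$. This yields
\[
T_\sigma f(x)=\sum_{\ell=0}^{\infty}\sum_{m=1}^{d_\ell}\widehat{T_\sigma f}(\ell,m)\,e_\ell^m(x)=\sum_{\ell=0}^{\infty}\sum_{m=1}^{d_\ell}\bigl(\sigma(\ell)\widehat{f}(\ell)\bigr)_m\,e_\ell^m(x),
\]
which is \eqref{form23}. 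The compact column-vector form $\sum_\ell[\sigma(\ell)\widehat{f}(\ell)]^\top e_\ell(x)$ is then just a rewriting. A priori the series converges only in $L^2(M)$, which is why the last assertion requires a separate argument.

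Third, I would upgrade the convergence to uniform convergence when $\sigma\in\mathcal{S}'(\Sigma)$ and $f\in C^\infty(M)$. By \eqref{EQ:sym-tempe} there exist $N_0$, $C$ with $\|\sigma(\ell)\|_{op}\leq C(1+\lambda_\ell)^{N_0}$, and by the smoothness characterization \eqref{EQ:smooth}, for every $N$ there is $C_N$ with $|\widehat{f}(\ell)|\leq C_N(1+\lambda_\ell)^{-N}$; hence $|\sigma(\ell)\widehat{f}(\ell)|\leq C\,C_N(1+\lambda_\ell)^{-N+N_0}$. To bound the eigenfunctions in sup-norm I would use the Sobolev characterisation \eqref{EQ:Sob-char}: choosing $s>n/2$, the embedding $H^s(M)\hookrightarrow C(M)$ gives $\|e_\ell^m\|_\infty\leq C'(1+\lambda_\ell)^{s/\nu}$. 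Combined with Weyl's law $d_\ell\leq C''(1+\lambda_\ell)^{n/\nu}$, the $\ell$-th block of \eqref{form23} is bounded in sup-norm by a constant times $(1+\lambda_\ell)^{-N+N_0+s/\nu+n/\nu}$; choosing $N$ large enough makes this summable in $\ell$, so the Weierstrass $M$-test gives uniform convergence on $M$.

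\textbf{Main obstacle.} Steps 1 and 2 are essentially bookkeeping around Theorem \ref{THM:inv} and Fourier inversion. The substantive step is the third one: uniform convergence is not automatic because four polynomial-in-$\lambda_\ell$ factors (symbol growth, eigenfunction sup-norm via Sobolev embedding, multiplicity $d_\ell$, and the row-size factor inside the block) must be absorbed simultaneously, which is only possible because the rapid decay of $\widehat{f}$ for smooth $f$ beats every fixed polynomial rate.
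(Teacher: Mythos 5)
Your proposal is correct and follows essentially the same route as the paper: \eqref{form23} is obtained from part (iv) of Theorem \ref{THM:inv} together with Fourier inversion of $T_\sigma f\in L^2(M)$, and \eqref{EQ:Tsigma-e} is elementary bookkeeping (you get it directly from property (i) and \eqref{symbinv2}, while the paper obtains it by plugging $f=e_j^k$ into \eqref{form23} and using \eqref{eqdeltas}; the two are interchangeable). The only difference is that you spell out the uniform-convergence step (sup-norm bound on eigenfunctions via Sobolev embedding, Weyl-type bound on $d_\ell$, rapid decay of $\widehat{f}(\ell)$ for smooth $f$), which the paper merely asserts as a consequence of \eqref{EQ:sym-tempe}; your estimates there are sound.
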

\begin{proof} 
Formula \eqref{form23} follows from Part (iv) of Theorem \ref{THM:inv},
with uniform convergence for $f\in C^{\infty}(M)$ in view of
\eqref{EQ:sym-tempe}.
Then, using \eqref{form23} and (\ref{eqdeltas}) we can calculate
\begin{align*}
(T_{\sigma}e_{j}^{k})(x)=&\sum\limits_{\ell=0}^{\infty}\sum\limits_{m=1}^{d_{\ell}}(\sigma(\ell)\widehat{e_{j}^{k}}(\ell))_me_{\ell}^m(x)\\ 
=&\sum\limits_{\ell=0}^{\infty}\sum\limits_{m=1}^{d_{\ell}}\left(\sum\limits_{i=1}^{d_{\ell}}(\sigma(\ell))_{mi}\widehat{e_{j}^{k}}(\ell,i)\right)e_{\ell}^m(x)\\
=&\sum\limits_{\ell=0}^{\infty}\sum\limits_{m=1}^{d_{\ell}}\sum\limits_{i=1}^{d_{\ell}}(\sigma(\ell))_{mi}\delta_{j\ell}\delta_{ki}e_{\ell}^m(x)\\
=&\sum\limits_{m=1}^{d_{j}}(\sigma(j))_{mk}e_{j}^m(x),
\end{align*}
yielding \eqref{EQ:Tsigma-e}.
\end{proof}

Theorem \ref{L2-abstract} characterising invariant operators bounded on 
$L^{2}(M)$ now becomes
\begin{thm}\label{L2} 
An invariant linear operator $T:\cinfm\rightarrow L^{2}(M)$ extends to a bounded
operator from $L^{2}(M)$ to $L^{2}(M)$ if and only if its symbol $\sigma$ satisfies
$$\sup\limits_{\ell\in\N_{0}}\|\sigma(\ell)\|_{op}<\infty,$$
where $ \|\sigma(\ell)\|_{op}=\|\sigma(\ell)\|_{{\mathscr L}(H_{\ell})}$ 
is the matrix multiplication
operator norm with respect to $H_{\ell}\simeq\ell^2(\ce^{d_{\ell}})$.
Moreover, we have
\[
\| T\|_{{\mathscr L}(L^{2}(M))} =\sup\limits_{\ell\in \N_{0}}\|\sigma(\ell)\|_{op}.
\]
\end{thm}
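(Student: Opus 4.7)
The plan is to prove both directions by combining Plancherel's identity \eqref{EQ:Plancherel2} with the Fourier multiplier representation furnished by Proposition \ref{PROP:quant}, reducing the operator-norm computation to a fibrewise matrix-norm computation on each $H_\ell$.

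First, I would handle the forward (sufficiency) direction. Set $N := \sup_{\ell} \|\sigma(\ell)\|_{op}$ and assume $N < \infty$. For $f \in C^\infty(M)$, Proposition \ref{PROP:quant} (equivalently, part (iv) of Theorem \ref{THM:inv}) gives $\widehat{Tf}(\ell) = \sigma(\ell)\widehat{f}(\ell)$ for all $\ell$. Applying Plancherel \eqref{EQ:Plancherel2} to $Tf$ and then bounding componentwise,
\begin{equation*}
\|Tf\|_{L^2(M)}^2 = \sum_{\ell=0}^{\infty} |\sigma(\ell)\widehat{f}(\ell)|^2 \leq \sum_{\ell=0}^{\infty} \|\sigma(\ell)\|_{op}^2\, |\widehat{f}(\ell)|^2 \leq N^2 \|f\|_{L^2(M)}^2,
\end{equation*}
again by Plancherel. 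Since $C^\infty(M)$ is dense in $L^2(M)$, $T$ extends uniquely to a bounded operator on $L^2(M)$ with $\|T\|_{\mathscr L(L^2(M))} \leq N$.

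For the reverse (necessity and matching lower bound) direction, I would exploit the fact that the basis functions $e_\ell^k$ lie in $C^\infty(M)$ by the ellipticity of $E$, so each $H_\ell$ is contained in the domain of $T$. Fix $\ell \in \N_0$ and a unit vector $v = (v_1,\dots,v_{d_\ell}) \in \ce^{d_\ell}$. Set $f_v := \sum_{k=1}^{d_\ell} v_k e_\ell^k \in H_\ell \subset C^\infty(M)$, so $\|f_v\|_{L^2(M)} = 1$ and $\widehat{f_v}(j) = \delta_{j\ell} v$. By invariance $Tf_v \in H_\ell$, and by part (iv) of Theorem \ref{THM:inv} its only nonzero Fourier block is $\widehat{Tf_v}(\ell) = \sigma(\ell) v$. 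Plancherel then gives $\|Tf_v\|_{L^2(M)} = |\sigma(\ell) v|$. Taking the supremum over unit $v$ yields $\|\sigma(\ell)\|_{op} \leq \|T\|_{\mathscr L(L^2(M))}$, and taking the supremum over $\ell$ gives $N \leq \|T\|_{\mathscr L(L^2(M))}$, hence both finiteness of $N$ and the reverse norm inequality.

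Combining the two inequalities produces the stated equality $\|T\|_{\mathscr L(L^2(M))} = \sup_{\ell} \|\sigma(\ell)\|_{op}$. I expect no real obstacle here: the proof is essentially a direct transcription of Theorem \ref{L2-abstract} to the concrete setting $\mathcal H = L^2(M)$, $\mathcal H^\infty = C^\infty(M)$. The only point requiring a moment of care is confirming that the test functions $f_v$ used in the lower bound genuinely sit in $C^\infty(M)$ so that $Tf_v$ is defined; this is automatic because $H_\ell$ consists of eigenfunctions of the elliptic operator $E$ and is therefore a subspace of $C^\infty(M)$.
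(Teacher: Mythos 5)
Your argument is correct and is essentially the paper's own route: the paper obtains Theorem \ref{L2} by specializing Theorem \ref{L2-abstract} to $\Hcal=L^2(M)$, $\Hcal^{\infty}=C^{\infty}(M)$, and the proof of that abstract statement is exactly your two-sided argument — Plancherel \eqref{EQ:Plancherel2} together with $\widehat{Tf}(\ell)=\sigma(\ell)\widehat{f}(\ell)$ for the upper bound, and testing on unit vectors $f_v\in H_\ell\subset C^{\infty}(M)$ for the lower bound. Your closing remark that $H_\ell\subset C^{\infty}(M)$ by ellipticity of $E$ is precisely the point that makes the specialization legitimate, so nothing is missing.
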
 

This can be extended to Sobolev spaces. We will use 
the multiplication property for Fourier multipliers which is a direct
consequence of Proposition \ref{comp1-abstract}:

\begin{prop}\label{comp1} 
If $S,T:C^{\infty}(M)\to L^{2}(M)$ are invariant operators with respect to $E$ 
such that the domain of $S\circ T$ contains $C^{\infty}(M)$,
then $S\circ T:C^{\infty}(M)\to L^{2}(M)$ is also 
invariant with respect to $E$. Moreover, if $\sigma_S$ denotes the symbol of 
$S$ and $\sigma_T$ denotes the symbols of $T$ 
with respect to  the same orthonormal basis then 
\[\sigma_{S\circ T}=\sigma_S\sigma_T,\]
i.e. $\sigma_{S\circ T}(j)=\sigma_S(j)\sigma_T(j)$ for all $j\in\N_{0}.$
\end{prop}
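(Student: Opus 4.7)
The plan is to read this as a direct specialization of Proposition \ref{comp1-abstract} to the concrete setting $\Hcal=L^2(M)$, $\Hcal^{\infty}=C^{\infty}(M)$, and partition $\{H_j\}_{j\in\N_0}$ given by the eigenspaces of $E$ (as set up in Section \ref{SEC:Fourier}). The bridge is Theorem \ref{THM:inv}: invariance of $S$ and $T$ relative to $E$ is the same as invariance relative to the orthonormal partition $\{H_j\}$ in the sense of Theorem \ref{THM:inv-rem}, so Proposition \ref{comp1-abstract} applies with no modification.

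For a self-contained argument, I would first check invariance of $S\circ T$ via condition (i) of Theorem \ref{THM:inv}. Ellipticity of $E$ gives $H_j\subset C^{\infty}(M)$, so for any $f\in H_j$ we have $f\in C^{\infty}(M)\subset\mathrm{Dom}(T)$; invariance of $T$ yields $Tf\in H_j\subset C^{\infty}(M)$, and the hypothesis on the domain of $S\circ T$ (together with $C^{\infty}(M)$ lying in $\mathrm{Dom}(S)$) ensures $S(Tf)$ is defined, whereupon invariance of $S$ gives $S(Tf)\in H_j$. Hence $(S\circ T)(H_j)\subset H_j$, so $S\circ T$ is a Fourier multiplier relative to $E$ with some symbol $\sigma_{S\circ T}$.

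To identify the symbol, I would use the explicit formula \eqref{EQ:Tsigma-e} from Proposition \ref{PROP:quant}: for each basis vector $e_j^k$,
\[
T e_j^k=\sum_{m=1}^{d_j}\sigma_T(j)_{mk}\,e_j^m,
\]
which is a \emph{finite} sum entirely inside $H_j\subset C^{\infty}(M)$, so linearity of $S$ allows it to be applied termwise. Using \eqref{EQ:Tsigma-e} again for $S$ gives
\[
(S\circ T)e_j^k=\sum_{m=1}^{d_j}\sigma_T(j)_{mk}\,S e_j^m=\sum_{i,m=1}^{d_j}\sigma_S(j)_{im}\sigma_T(j)_{mk}\,e_j^i=\sum_{i=1}^{d_j}\bigl(\sigma_S(j)\sigma_T(j)\bigr)_{ik}e_j^i.
\]
Reading off the coefficient of $e_j^i$ and comparing with \eqref{symbinv2} (which says $\sigma_{S\circ T}(j)_{ik}=((S\circ T)e_j^k,e_j^i)$) yields $\sigma_{S\circ T}(j)=\sigma_S(j)\sigma_T(j)$.

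There is no serious obstacle here: finite-dimensionality of each $H_j$ removes any convergence concerns when $S$ is applied to $Te_j^k$, and the domain hypothesis on $S\circ T$ is exactly what is needed to justify writing $S(Tf)$ for $f\in C^{\infty}(M)$. The only point that deserves a brief mention is why it suffices to test the symbol on basis vectors $e_j^k$: this is immediate from \eqref{symbinv}, which expresses the symbol entirely in terms of its action on $\{e_j^k\}$.
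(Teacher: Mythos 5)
Your proposal is correct and matches the paper's route: the paper obtains Proposition \ref{comp1} as a direct specialization of the abstract Proposition \ref{comp1-abstract} to $\Hcal=L^2(M)$, $\Hcal^\infty=C^\infty(M)$ with $H_j$ the eigenspaces of $E$, which is exactly your framing. Your additional self-contained verification (condition (i) for $S\circ T$ plus the finite-sum computation on each $H_j$ giving $\sigma_{S\circ T}(j)=\sigma_S(j)\sigma_T(j)$) is just the content of that abstract result written out explicitly, and it is sound.
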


Recalling Sobolev spaces $H^{s}(M)$ in \eqref{EQ:Sob-char} we have:

\begin{cor}\label{COR:Sobolev} 
Let an invariant linear operator $T:\cinfm\rightarrow C^{\infty}(M)$ 
have symbol $\sigma_{T}$ for which there exists $C>0$  and $m\in\mathbb R$ such that 
\[
\|\sigma_{T}(\ell)\|_{op}\leq C(1+\lambda_{\ell})^{\frac{m}{\nu}}
\]
holds for all $\ell\in\N_{0}$. Then $T$ extends to a bounded operator
from $H^{s}(M)$ to $H^{s-m}(M)$ for every $s\in\mathbb R$.
\end{cor}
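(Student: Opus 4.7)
My plan is to pass to the Fourier side using the Sobolev characterization \eqref{EQ:Sob-char} and to bound the symbol action termwise. Fix $f\in C^{\infty}(M)$. Applying Proposition \ref{PROP:quant} (specifically formula \eqref{form23}) and pairing with $e_{\ell}^{m}$, I obtain the identity
\[
\widehat{Tf}(\ell) = \sigma_{T}(\ell)\,\widehat{f}(\ell) \quad \text{for every } \ell\in\N_{0},
\]
so that $|\widehat{Tf}(\ell)| \leq \|\sigma_{T}(\ell)\|_{op}\,|\widehat{f}(\ell)| \leq C(1+\lambda_{\ell})^{m/\nu}|\widehat{f}(\ell)|$ by hypothesis.

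Next, I invoke the Plancherel-type characterization \eqref{EQ:Sob-char} to compute
\[
\|Tf\|_{H^{s-m}(M)}^{2} = \sum_{\ell=0}^{\infty}(1+\lambda_{\ell})^{2(s-m)/\nu}\,|\widehat{Tf}(\ell)|^{2}
\leq C^{2}\sum_{\ell=0}^{\infty}(1+\lambda_{\ell})^{2s/\nu}\,|\widehat{f}(\ell)|^{2} = C^{2}\,\|f\|_{H^{s}(M)}^{2},
\]
where the exponents combine exactly because the weights on the two sides differ by $(1+\lambda_{\ell})^{2m/\nu}$, which is absorbed by the symbol bound. This yields $\|Tf\|_{H^{s-m}} \leq C\|f\|_{H^{s}}$ on the dense subspace $C^{\infty}(M)\subset H^{s}(M)$, and the required bounded extension follows by density.

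An alternative, slightly more structural route is to write $T = (I+E)^{-(s-m)/\nu}\circ S \circ (I+E)^{s/\nu}$ with $S := (I+E)^{(s-m)/\nu}\,T\,(I+E)^{-s/\nu}$. By Proposition \ref{comp1} together with \eqref{EQ:symbol-F}, the operator $S$ is invariant with symbol $\sigma_{S}(\ell) = (1+\lambda_{\ell})^{-m/\nu}\sigma_{T}(\ell)$, so $\sup_{\ell}\|\sigma_{S}(\ell)\|_{op}\leq C$ by the hypothesis. Theorem \ref{L2} then gives $S\in\mathscr{L}(L^{2}(M))$, and since $(I+E)^{t/\nu}:H^{t}(M)\to L^{2}(M)$ is an isomorphism for every $t\in\mathbb{R}$ by \eqref{EQ:Sob-char}, the composition formula yields the desired boundedness $T:H^{s}(M)\to H^{s-m}(M)$.

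The only real subtlety is checking that the Fourier-series manipulations are legitimate on $C^{\infty}(M)$: this is supplied directly by Proposition \ref{PROP:quant}, whose uniform convergence statement removes any convergence issue for smooth $f$. The rest is a bookkeeping exercise of exponents, and the density of $C^{\infty}(M)$ in $H^{s}(M)$ is standard. There is no serious obstacle; the proof is essentially a reduction of the weighted $\ell^{2}$ estimate on the symbol side to the unweighted case already handled by Theorem \ref{L2}.
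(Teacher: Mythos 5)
Your proposal is correct, and in fact it contains two proofs: your ``alternative, more structural route'' is precisely the paper's own argument (conjugate by powers of $I+E$, compute $\sigma_S(\ell)=(1+\lambda_\ell)^{-m/\nu}\sigma_T(\ell)$ via Proposition \ref{comp1} and \eqref{EQ:symbol-F}, and apply Theorem \ref{L2}), while your primary argument is a direct unfolding of that reduction: you use the multiplier identity $\widehat{Tf}(\ell)=\sigma_T(\ell)\widehat f(\ell)$ together with the weighted Plancherel characterisation \eqref{EQ:Sob-char} and estimate the weighted $\ell^2$ sums termwise. The direct route buys transparency (no appeal to the composition formula or to Theorem \ref{L2}, just bookkeeping of the weights), at the cost of two small points you should state explicitly: first, the equality you write for $\|Tf\|_{H^{s-m}}^2$ is really an equivalence of norms, since \eqref{EQ:Sob-char} characterises $H^{s-m}(M)$ up to constants rather than defining its norm; second, the identity $\widehat{Tf}(\ell)=\sigma_T(\ell)\widehat f(\ell)$ for all $f\in C^\infty(M)$ is condition (iv) of Theorem \ref{THM:inv} (equivalently Proposition \ref{PROP:quant}), which formally carries the hypothesis that the domain of $T^*$ contains $C^\infty(M)$ -- the same implicit step the paper takes in Proposition \ref{PROP:quant}, so it is not a gap relative to the paper, but it deserves a mention. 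The paper's conjugation argument packages these issues into results already proved (Proposition \ref{comp1}, Theorem \ref{L2}) and also makes clear that the corollary is exactly the $L^2$ boundedness theorem transported by the isomorphisms $(I+E)^{t/\nu}:H^t(M)\to L^2(M)$; your direct computation generalises more readily to weighted variants where no convenient conjugating operator is available.
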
 
\begin{proof}
We note that by \eqref{EQ:Sob-char} the condition that 
$T:H^{s}(M)\to H^{s-m}(M)$ is bounded is equivalent to 
the condition that the operator
$$S:=(I+E)^{\frac{s-m}{\nu}}\circ T\circ (I+E)^{-\frac{s}{\nu}}$$ is bounded
on $L^{2}(M)$.
By Proposition \ref{comp1} and the fact that the powers of $E$ are
pseudo-differential operators with diagonal symbols, 
see \eqref{EQ:symbol-F}, we have
$$
\sigma_{S}(\ell)=(1+\lambda_{\ell})^{-\frac{m}{\nu}}\sigma_{T}(\ell).
$$
But then $\|\sigma_{S}(\ell)\|_{op}\leq C$ for all $\ell$ in view of the assumption on
$\sigma_{T}$, so that
the statement follows from Theorem \ref{L2}.
\end{proof}

\section{Schatten classes of operators on compact manifolds}
\label{SEC:Schatten-mfds}

In this section we give an application of the constructions in the previous
section to determine the membership of operators in Schatten classes
and then apply it to a particular family of operators on $L^{2}(M)$.

As a consequence of Theorem \ref{schchr-abstract}, 
we can now characterise invariant operators in Schatten classes on compact manifolds. 
We note that this characterisation does not assume any regularity of the kernel nor
of the symbol. Once we observe that the conditions for the membership
in the Schatten classes depend only on the basis $e_{j}^{k}$ and not on 
the operator $E$, we immediately obtain:

\begin{thm}\label{schchr} Let $0<r<\infty$. 
An invariant operator $T:L^2(M)\rightarrow L^2(M)$ is in $S_r(L^2(M))$ 
if and only if  $\sum\limits_{\ell=0}^{\infty}\|\sigma_{T}(\ell)\|_{S_r}^r<\infty$. 
Moreover
\[\|T\|_{S_r(L^2(M))}^r=\sum\limits_{\ell=0}^{\infty}\|\sigma_{T}(\ell)\|_{S_r}^r.\]
If an invariant operator $T:L^2(M)\rightarrow L^2(M)$ is in the trace class
$S_1(L^2(M))$, then 
\[\Tr(T)=\sum\limits_{\ell=0}^{\infty}\Tr(\sigma_{T}(\ell)).\]
\end{thm}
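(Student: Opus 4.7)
The plan is to derive Theorem \ref{schchr} as a direct specialisation of the abstract Schatten characterisation in Theorem \ref{schchr-abstract}, so the bulk of the work amounts to identifying the present setting with the hypotheses of that abstract result and then quoting it.

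First I would fix the Hilbert space $\Hcal = L^{2}(M)$ and the partition
\[
L^{2}(M) = \bigoplus_{j=0}^{\infty} H_{j}, \qquad H_{j} = \mathrm{span}\{e_{j}^{k}\}_{k=1}^{d_{j}},
\]
where $\{e_{j}^{k}\}$ is the fixed orthonormal basis of eigenfunctions of $E$ from Section \ref{SEC:Fourier}. This is exactly a decomposition of the form \eqref{EQ:sum}, with the dense subspace $\Hcal^{\infty} = C^{\infty}(M)$ containing every $e_{j}^{k}$ by ellipticity. Since $T$ is assumed to be invariant relative to $E$, by Theorem \ref{THM:inv} it satisfies condition (A) of Theorem \ref{THM:inv-rem}, i.e. $T(H_{j}) \subset H_{j}$, and its matrix symbol $\sigma_{T}(\ell) \in \ce^{d_{\ell} \times d_{\ell}}$ defined by \eqref{symbinv} coincides with the abstract symbol of Theorem \ref{THM:inv-rem}, as explicitly remarked after Theorem \ref{THM:inv}.

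Next I would invoke the key observation emphasised by the author just before the statement: Schatten-class membership of an operator on $L^{2}(M)$ depends only on the operator itself (through its singular values), not on the choice of decomposition of the ambient Hilbert space. In particular, $T \in S_{r}(L^{2}(M))$ in the sense of Section 1 coincides with $T \in S_{r}(\Hcal)$ in the sense of Theorem \ref{schchr-abstract} applied with $\Hcal = L^{2}(M)$. Likewise, for each $\ell$, the Schatten norm $\|\sigma_{T}(\ell)\|_{S_{r}}$ of the finite matrix $\sigma_{T}(\ell)$ on $\ce^{d_{\ell}}$ coincides with $\|\sigma_{T}(\ell)\|_{S_{r}(H_{\ell})}$, because the basis $\{e_{\ell}^{k}\}_{k=1}^{d_{\ell}}$ realises the unitary identification $H_{\ell} \simeq \ce^{d_{\ell}}$.

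With these identifications in hand, the first assertion — the equivalence together with the norm equality
\[
\|T\|_{S_{r}(L^{2}(M))}^{r} = \sum_{\ell=0}^{\infty} \|\sigma_{T}(\ell)\|_{S_{r}}^{r}
\]
— is precisely formula \eqref{EQ:Sch1} from Theorem \ref{schchr-abstract}. For the trace formula, note that if $T \in S_{1}(L^{2}(M))$, then the abstract trace formula \eqref{EQ:Sch2} gives
\[
\Tr(T) = \sum_{\ell=0}^{\infty} \Tr(\sigma_{T}(\ell)),
\]
and again the right-hand side is the same whether the trace of $\sigma_{T}(\ell)$ is computed on $H_{\ell}$ or on $\ce^{d_{\ell}}$ via the fixed basis.

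The only conceptually delicate point — and what I would regard as the main obstacle — is the verification that nothing is lost in passing from the abstract setup (where the distinguished operator $E_{o}$ in Theorem \ref{THM:inv-rem2} is assumed bounded) to the present one where $E$ is an unbounded elliptic pseudo-differential operator. This is precisely the issue handled in the passage from Theorem \ref{THM:inv-rem} to Theorem \ref{THM:inv}: invariance here is formulated through conditions (i)--(iv), which only use the eigenspace structure and not the boundedness of $E$ itself. Since Theorem \ref{schchr-abstract} depends only on the partition $\{H_{j}\}$ and the bounded extension of $T$, not on any operator generating the partition, its conclusions transfer verbatim.
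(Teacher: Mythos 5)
Your proposal is correct and follows essentially the same route as the paper: the paper also obtains Theorem \ref{schchr} immediately from Theorem \ref{schchr-abstract}, observing that invariance relative to $E$ gives condition (A) of Theorem \ref{THM:inv-rem} for the eigenspace partition of $L^{2}(M)$, that the symbol coincides with the abstract one, and that Schatten membership and the norms $\|\sigma_{T}(\ell)\|_{S_{r}(H_{\ell})}\simeq\|\sigma_{T}(\ell)\|_{S_{r}(\ce^{d_{\ell}})}$ depend only on the basis $\{e_{j}^{k}\}$ and not on the (unbounded) operator $E$ itself. Your extra remarks on the harmlessness of $E$ being unbounded match the paper's stated reasoning, so there is nothing to add.
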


An interested reader can find in \cite{DRT:JMPA} further applications to Schatten classes and the so-called $r$-nuclearity of operators.


\end{document}